\newtheorem{thm}{Theorem}[section]
\newtheorem{lem}[thm]{Lemma}
\newtheorem{prop}[thm]{Proposition}
\newtheorem{cor}[thm]{Corollary}
\theoremstyle{definition}
\newtheorem{definition}[thm]{Definition}
\newtheorem{dfn}[thm]{Definition}
\newtheorem{exm}[thm]{Example}
\theoremstyle{remark}
\newtheorem{rem}[thm]{Remark}
\newcommand{\exmsymbol}{\hfill$\circ$}
\newcommand{\cset}{\mathds{C}}
\newcommand{\nset}{\mathds{N}}
\newcommand{\rset}{\mathds{R}}
\newcommand{\conv}{\mathrm{conv}\,}
\newcommand{\diff}{\mathrm{d}}
\newcommand{\pos}{\mathrm{Pos}}
\newcommand{\tr}{\mathrm{tr}}
\newcommand{\id}{\mathrm{id}}
\newcommand{\one}{\mathds{1}}
\newcommand{\Herm}{\mathrm{Herm}}
\newcommand{\Pos}{\pos}
\newcommand{\skl}[2]{\left\langle #1, #2 \right\rangle}
\newcommand{\cA}{\mathcal{A}}
\newcommand{\cat}{\mathcal{C}}
\newcommand{\cL}{\mathcal{L}}
\newcommand{\cX}{\mathcal{X}}
\newcommand{\fB}{\mathfrak{B}}
\author{Philipp J.\ di Dio\footnote{Corresponding author, email: philipp.didio@uni-konstanz.de}$^{\text{,a,b,c}}$ and Lars-Luca Langer\footnote{Email: lars-luca.langer@uni-konstanz.de}$^{\text{,a,c}}$}
\address{$^\text{a}$Department of Mathematics and Statistics, University of Konstanz, Universit\"atsstra{\ss}e 10, D-78464 Konstanz, Germany}
\address{$^\text{b}$Department of Computer and Information Science, University of Konstanz, Universit\"atsstra{\ss}e 10, D-78464 Konstanz, Germany}
\address{$^\text{c}$Zukunftskolleg, Universtity of Konstanz, Universit\"atsstra{\ss}e 10, D-78464 Konstanz, Germany}
\journal{arXiv}
\title{Characterization of Matrix $K$-Positivity Preserver for $K=\rset^n$ and for Compact Sets $K\subseteq\rset^n$}
\begin{document}

\begin{abstract}
For any closed $K\subseteq\rset^n$, in [P.\ J.\ di\,Dio, K.\ Schmüdgen: \textit{$K$-Positivity Preserver and their Generators}, SIAM J.\ Appl.\ Algebra Geom.\ \textbf{9} (2025), 794--824] all $K$-positivity preserver have been characterized, i.e., all linear maps $T:\rset[x_1,\dots,x_n]\to\rset[x_1,\dots,x_n]$ such that $Tp\geq 0$ on $K$ for all $p\geq 0$ on $K$.
An important extension of polynomials $\rset[x_1,\dots,x_n]$ with real coefficients are polynomials $\rset^{m\times m}[x_1,\dots,x_n]$ with matrix coefficients.
Non-negativity on $K$ for matrix polynomials with Hermitian coefficients $\Herm_m$ is then $p(x)\succeq 0$ for all $x\in K$.
In the current work, we investigate linear maps $T:\Herm_m[x_1,\dots,x_n]\to\Herm_m[x_1,\dots,x_n]$.
We focus on matrix $K$-positivity preserver, i.e., $Tp\succeq 0$ on $K$ for all $p\succeq 0$ on $K$.
For $K=\rset^n$ and compact sets $K\subseteq\rset^n$, we give characterizations of matrix $K$-positivity preservers.
We discuss the difference between the real and the matrix coefficient case and where our proof fails for general sets $K\subseteq\rset^n$ with $K\neq \rset^n$ and $K$ non-compact.
\end{abstract}

\begin{keyword}
linear operators\sep positivity preserver\sep generator\sep moments
\MSC[2020] Primary 44A60, 47A57; Secondary 47B38, 60E07.
\end{keyword}

\maketitle

\section{Introduction}
\label{sec:intro}

Let $n \in \nset$ and denote by $\rset[x_1, \dots, x_n]$ the set of polynomials in $n$ variables with real coefficients.
Let $T: \rset[x_1, \dots, x_n] \to \rset[x_1, \dots, x_n]$ be linear. 
It is long known that every $T$ has a unique representation
\begin{equation}\label{eq:DiffSumReprNonMatrixOp}
T = \sum_{\alpha \in \nset_0^n} q_\alpha\cdot \partial^\alpha
\end{equation}
with unique $q_\alpha\in\rset[x_1, \dots, x_n]$ for all $\alpha \in \nset_0^n$, see e.g.\ \cite[Lem.\ 2.3]{netzer10}.
If $Tp\geq 0$ on $K\subseteq\rset^n$ for any $p\in\rset[x_1,\dots,x_n]$ with $p\geq 0$ on $K$, then $T$ is said to be a $K$-positivity preserver.
$K$-positivity preservers were characterized in \cite[Thm.\ 3.1]{borcea11} for $K=\rset^n$ and in \cite[Thm.\ 3.5]{didio25KPosPresGen} for all closed $K\subseteq\rset^n$.
The aim of this work is to extend these characterization to linear operators on matrix polynomials $\rset^{m\times m}[x_1,\dots,x_n]$.

This paper is structured as follows. 
In the next section (\Cref{sec:prelim}) we collect preliminaries which are needed to prove the main results to make the paper as self contained as possible.
In \Cref{sec:NetzerCharacterizationLinOp} we give a characterization of linear operators
\[T: V[x_1, \dots, x_n] \to V[x_1, \dots, x_n]\]
similar to (\ref{eq:DiffSumReprNonMatrixOp}).
In \Cref{sec:BorceaCharacterizationOnRn} we give our main result (\Cref{thm:BorceaCharacterizationPolynomialMatrices}) which is a generalization of \cite[Thm.\ 3.1]{borcea11} to the polynomial matrix case.
In \Cref{exm:QAlphaMeasureNotPositive} we will explain the problem of writing the coefficients $Q_\alpha$ from \Cref{lem:DiffSumRepresentationRLinearOp} as moment sequences.
In \Cref{sec:BorceaCharacterizationOnCompactSets} we restrict ourselves to compact sets $K\subseteq\rset^n$ where these problems do not occur.

\section{Preliminaries}
\label{sec:prelim}

\subsection{Hermitian Matrices}

Let $m, n \in \nset$, let $\Herm_m \subseteq \cset^{m \times m}$ be the set of all $m \times m$ Hermitian matrices, let $\Herm_m[x_1,\dots,x_n]$ be the set of all polynomials in $n$ variables $x_1,\dots,x_n$ with Hermitian matrix coefficients, and denote by $\Herm_{m,+}$ the subset of positive semi-definite matrices.
The set $\Herm_{m,+} \subseteq \Herm_m$ is a full-dimensional convex cone and hence every $M \in \Herm_m$ can be written as $M = M^+ - M^-$ with $M^+, M^- \in \Herm_{m,+}$. 
A matrix polynomial $P \in \Herm_m[x_1, \dots, x_n]$ is positive on $K\subseteq\rset^n$, written $P \succeq 0$ on $K$, if $P(x) \in \Herm_{m,+}$ for all $x \in K$.
We define
\[\Pos(K) := \big\{ P \in \Herm_m[x_1,\dots, x_n] \;\big|\; P(x) \succeq 0\ \text{ for all } x \in K \big\}.\]

The space of Hermitian matrices is a finite-dimensional real Hilbert space, see e.g.\ \cite[Sect.\ 2]{maedschmued24_01}.
Let $E_{k,l} := (\delta_{i,k}\cdot\delta_{j,l})_{i,j=1}^m$ where $\delta_{i,j}$ is the Kronecker delta function.
Then, for $k,l=1,\dots,m$, the
\[
	H_{k, l} := \begin{cases}
		\frac{1}{\sqrt{2}}(E_{k, l} + E_{l, k}) 
		&\text{ for } k < l,
	\\	E_{k, k} 
		&\text{ for } k = l,
	\\	\frac{i}{\sqrt{2}}(E_{k, l} - E_{l, k}) 
		&\text{ for } k > l.
	\end{cases}
\]
form an orthonormal basis of $\Herm_m$.
The inner product $\langle\,\cdot\,,\,\cdot\,\rangle$ on $\Herm_m$ is given by
\[
	\skl{A}{B} := \tr(A^*B) = \tr(AB) \in \rset
\]
for all $A, B \in\Herm_m$.
Positive semi-definiteness can be characterized by the following.

\begin{lem}[see e.g.\ \protect{\cite[Prop.\ A.21]{schmudMomentBook}}]
\label{lem:MatrixPositiveIffInnerProductWithAllPositiveMatricesIsPositive}
Let $m \in \nset$ and let $M \in \Herm_m$. 
Then the following are equivalent:
\begin{enumerate}[(i)]
\item $M \succeq 0$.
\item $\skl{M}{X} \geq 0$ for all $X \in \Herm_{m,+}$.
\end{enumerate}
\end{lem}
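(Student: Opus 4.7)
The plan is to prove the equivalence by handling each direction with a short computation using the trace inner product.

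For the direction (i) $\Rightarrow$ (ii), I would use the spectral decomposition of $M$. Since $M \succeq 0$, write $M = \sum_{i=1}^m \lambda_i v_i v_i^*$ with $\lambda_i \geq 0$ and orthonormal eigenvectors $v_i \in \cset^m$. Then for any $X \in \Herm_{m,+}$, using the cyclic property of the trace,
\[
\langle M, X\rangle = \tr(MX) = \sum_{i=1}^m \lambda_i \,\tr(v_i v_i^* X) = \sum_{i=1}^m \lambda_i \,v_i^* X v_i \geq 0,
\]
since each term $v_i^* X v_i \geq 0$ by positive semi-definiteness of $X$.

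For the converse (ii) $\Rightarrow$ (i), the key observation is that rank-one matrices of the form $v v^*$ with $v \in \cset^m$ are positive semi-definite and Hermitian, hence admissible test matrices. Fix an arbitrary $v \in \cset^m$ and set $X := v v^* \in \Herm_{m,+}$. Then
\[
0 \leq \langle M, v v^* \rangle = \tr(M v v^*) = v^* M v.
\]
Since $v$ was arbitrary, $M \succeq 0$ by definition of positive semi-definiteness.

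I do not expect any real obstacle here; both directions are one-line computations once the spectral decomposition and the cyclicity of the trace are invoked. The only minor point to be careful about is ensuring that $vv^*$ indeed lies in $\Herm_{m,+}$ (which follows from $(vv^*)^* = vv^*$ and $w^*(vv^*)w = |v^*w|^2 \geq 0$), so that it is a valid test matrix for hypothesis (ii). This proof strategy is the standard textbook one and matches the reference to \cite[Prop.\ A.21]{schmudMomentBook} cited by the authors.
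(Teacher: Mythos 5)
Your proof is correct and complete: the forward direction via the spectral decomposition and cyclicity of the trace, and the converse via the rank-one test matrices $vv^*$, are both sound, and you rightly check that $vv^*$ is an admissible element of $\Herm_{m,+}$. The paper itself supplies no proof of this lemma --- it is stated only with a citation to \cite[Prop.\ A.21]{schmudMomentBook} --- and your argument is precisely the standard one that reference employs, so there is nothing further to reconcile.
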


\subsection{Hermitian Matrix Valued Measures}

\begin{dfn}\label{def:MatrixValuedMeasure}
Let $m \in \nset$ and $(\cX,\cA)$ be a measurable space. 
We say $\mu$ is a \emph{matrix-valued measure} on $(\cX,\cA)$ if
\[\mu = (\mu_{i, j})_{i, j = 1, \dots, m}\]
for some finite signed complex measures $\mu_{i, j}$ on $(\cX,\cA)$ and $\mu(S) \in \Herm_m$ for all $S \in \cA$.
A matrix-valued measure $\mu$ is called \emph{positive} if $\mu(S) \in \Herm_+$ for all $S \in \cA$.
\end{dfn}

The definition of positive matrix-valued measures coincides with the definition of positive operator-valued measures from \cite[Rem.\ 2]{cimpzalar13} and \cite[Def.\ 1]{berb09}.
Integrals with respect to these measures are defined in the following way.

\begin{dfn}\label{def:integral_polymatrix}
Let $m, n \in \nset$, $K \subseteq \rset^n$ be closed, $p \in \Herm_m[x_1, \dots, x_n]$, and let $\mu$ be a matrix-valued measure. 
We define the integral
\[
	\int_K \skl{p(x)}{\diff\mu(x)} := \sum_{i, j = 1}^m \int_K p_{i, j}(x) ~\diff\mu_{i, j}'(x) \in \rset
\]
with $p_{i, j} = \skl{p}{H_{i, j}} \in \rset[x_1, \dots, x_n]$ and $\mu_{i, j}'= \skl{\mu}{H_{i, j}}$ a signed measure.
\end{dfn}

Let $m \in\nset$ and $A = (a_{k, l})_{k, l = 1}^m\in\Herm_{m,+}$.
Then
\[
	|a_{k, l}|^2 \leq a_{k, k} \cdot a_{l, l} \leq \tr(A)^2
\]
for all $k, l = 1, \dots, m$. 
This extends to positive matrix-valued measures.

\begin{lem}[see \protect{\cite[Lem.\ 1.1]{schmued87}}]
Let $m \in \nset$, let $\cX$ be a locally compact Hausdorff space equipped with some $\sigma$-algebra $\cA$. 
Let $\mu$ on $(\cX,\cA)$ be a positive matrix-valued measure with components $\mu_{i,j}$, $i,j = 1, \dots, m$.
We define the trace measure $\tr(\mu)$ by
\[\tr(\mu) := \sum_{i = 1}^m \mu_{i, i}.\]
Then $\tr(\mu)$ is a real-valued positive measure and $\mu_{i, j} \ll \tr(\mu)$, i.e., $\mu_{i,j}$ is absolutely continuous with respect to $\tr(\mu)$ for all $i, j = 1, \dots, m$.
\end{lem}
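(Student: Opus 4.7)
The plan is to reduce everything to the scalar inequality $|a_{k,l}|^2 \leq a_{k,k}\cdot a_{l,l} \leq \tr(A)^2$ for $A=(a_{k,l})\in\Herm_{m,+}$ that is recalled immediately before the lemma, applied pointwise to the values $\mu(S)$ and $\mu(T)$.

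First I would verify that $\tr(\mu)$ is a positive real-valued (finite) measure. Since $\mu$ is a positive matrix-valued measure, $\mu(S)\in\Herm_{m,+}$ for every $S\in\cA$. The diagonal entries of a positive semi-definite matrix are non-negative real numbers, so each component $\mu_{i,i}$ (a priori a finite signed complex measure by \Cref{def:MatrixValuedMeasure}) is in fact a non-negative real-valued finite measure on $(\cX,\cA)$. The finite sum $\tr(\mu)=\sum_{i=1}^m \mu_{i,i}$ is therefore a positive real-valued finite measure, establishing the first assertion.

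For absolute continuity, let $S\in\cA$ with $\tr(\mu)(S)=0$, and let $T\subseteq S$ be any measurable subset. Monotonicity of the positive measure $\tr(\mu)$ yields $\tr(\mu)(T)=0$, and by construction this equals $\tr(\mu(T))$. Since $\mu(T)\in\Herm_{m,+}$, the recalled inequality applied to $A=\mu(T)$ gives
\[
	|\mu_{i,j}(T)| \;\leq\; \tr(\mu(T)) \;=\; \tr(\mu)(T) \;=\; 0,
\]
so $\mu_{i,j}(T)=0$ for every measurable $T\subseteq S$. Taking the supremum over measurable partitions of $S$, we conclude $|\mu_{i,j}|(S)=0$, which is the precise statement of $\mu_{i,j}\ll \tr(\mu)$ for a signed complex measure.

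There is no real obstacle here: once the scalar bound $|a_{k,l}|\leq \tr(A)$ for positive semi-definite $A$ is in hand, the whole argument is a direct pointwise reduction. The only subtlety worth flagging is that for signed complex $\mu_{i,j}$ one must not merely show $\mu_{i,j}(S)=0$ but also $\mu_{i,j}(T)=0$ for every measurable $T\subseteq S$ (equivalently $|\mu_{i,j}|(S)=0$); however, this is automatic from the above since the hypothesis $\tr(\mu)(S)=0$ is inherited by every measurable subset of $S$. The local compactness of $\cX$ appears to play no role in the proof and is presumably part of the hypotheses only to match the framework of \cite{schmued87}.
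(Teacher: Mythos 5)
Your proof is correct. The paper itself gives no proof of this lemma -- it is quoted from \cite[Lem.\ 1.1]{schmued87} -- and your argument (positivity of the diagonal components for the first claim, and the pointwise bound $|\mu_{i,j}(T)|\leq\tr(\mu(T))$ for $\mu(T)\in\Herm_{m,+}$ applied to all measurable $T\subseteq S$ to get $|\mu_{i,j}|(S)=0$ for the second) is exactly the standard argument of the cited reference; your remark that one needs the total variation $|\mu_{i,j}|(S)$ to vanish, not just $\mu_{i,j}(S)$, is the right point to flag and is handled correctly.
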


\begin{rem}[see \protect{\cite[Lem.\ 1.1 and 1.2]{schmued87}}, \protect{\cite[Section 2]{maedschmued24_01}}]\label{rem:PositiveMatrixValuedMeasureRadonNikodymDerivative}
Using the Radon--Nikodym derivative
\[\varphi_{i, j}: \rset^n \to \cset \quad\text{with}\quad \varphi_{i, j} := \frac{\diff\mu_{i, j}}{\diff\tr(\mu)},\]
for positive matrix-valued measures $\mu$, the integral from \Cref{def:integral_polymatrix} becomes
\[
\int_K \skl{p(x)}{\diff\mu(x)} = \int_K \skl{p(x)}{\rho(x)}~\diff\tr(\mu)(x)
\]
where
\[\rho: \rset^n \to \cset^{m \times m} \quad\text{with}\quad \rho(x) := \sum_{i, j = 1}^m \varphi_{i, j}(x) E_{i, j}\]
is positive semi-definite and Hermitian $\tr(\mu)$-almost everywhere.
Hence,
\[
\int_K \skl{p(x)}{\diff\mu(x)} \geq 0
\]
for positive matrix-valued measures $\mu$ and $p\in\pos(\rset^n)$.
\exmsymbol
\end{rem}

Another useful property of the integral from \Cref{def:integral_polymatrix} is the following.

\begin{lem}\label{lem:IntegralInnerProductSwap}
Let $m, n \in \nset$ and let $K \subseteq \rset^n$ be closed.
Let $\mu$ be a matrix-valued measure on $K$. 
Let $p \in \rset[x_1, \dots, x_n]$ and $M \in \Herm_m$. 
Then
\[
	\int_K \skl{M\cdot p(x)}{\diff\mu(x)} = \skl{\int_K p(x) ~\diff\mu(x)}{M}.
\]
\end{lem}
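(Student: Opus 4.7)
The statement is a routine compatibility between the integral from \Cref{def:integral_polymatrix} and the Hilbert-space inner product $\skl{\cdot}{\cdot}$ on $\Herm_m$. The plan is to reduce to a single basis element via bilinearity and then unwind the two definitions.

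First, I would note that both sides of the claimed identity are $\rset$-linear in $M \in \Herm_m$: the left-hand side via the $H_{i,j}$-expansion appearing in \Cref{def:integral_polymatrix}, and the right-hand side by bilinearity of $\skl{\cdot}{\cdot}$ together with linearity of the component-wise integral. Expanding $M = \sum_{k,l} \skl{M}{H_{k,l}}\, H_{k,l}$ in the orthonormal basis, it therefore suffices to prove the identity for $M = H_{k,l}$ with fixed $k, l \in \{1,\dots,m\}$.

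For the left-hand side with $M = H_{k,l}$, the scalar polynomial $p(x)$ merely rescales the single basis element, so orthonormality of $\{H_{i,j}\}$ yields
\[
\skl{H_{k,l}\, p(x)}{H_{i,j}} = \delta_{i,k}\,\delta_{j,l}\, p(x).
\]
Hence \Cref{def:integral_polymatrix} collapses to one term:
\[
\int_K \skl{H_{k,l}\, p(x)}{\diff\mu(x)} = \int_K p(x)\, \diff\mu'_{k,l}(x),
\]
where $\mu'_{k,l} = \skl{\mu}{H_{k,l}}$ as in \Cref{def:integral_polymatrix}.

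For the right-hand side, I would interpret $N := \int_K p(x)\,\diff\mu(x)$ entry-wise, i.e.\ $N_{i,j} := \int_K p(x)\,\diff\mu_{i,j}(x)$; because $\mu(S)\in\Herm_m$ for all $S$ and $p$ is real-valued, $N \in \Herm_m$ and $\skl{N}{H_{k,l}}$ is well-defined. Using $\skl{A}{B} = \tr(AB)$ on $\Herm_m$ and commuting the finite sum with the integral,
\[
\skl{N}{H_{k,l}} = \tr(N\, H_{k,l}) = \sum_{i,j=1}^m N_{i,j}\, (H_{k,l})_{j,i} = \int_K p(x)\, \diff\mu'_{k,l}(x),
\]
since $\mu'_{k,l} = \sum_{i,j}(H_{k,l})_{j,i}\, \mu_{i,j}$ by definition of $\skl{\mu}{H_{k,l}}$. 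The two expressions agree, and the general case follows by the bilinearity reduction.

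Since this is essentially an unwinding of definitions, there is no serious obstacle; the only points requiring care are (i) giving the undefined symbol $\int_K p\,\diff\mu$ on the right-hand side its natural component-wise meaning and checking that the resulting matrix lies in $\Herm_m$, and (ii) keeping the transpose that appears in $\tr(N H_{k,l}) = \sum N_{i,j}(H_{k,l})_{j,i}$ consistent with the definition of the signed measures $\mu'_{k,l}$.
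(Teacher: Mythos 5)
Your proof is correct and takes essentially the same route as the paper: both arguments unwind \Cref{def:integral_polymatrix} via the orthonormal basis $\{H_{i,j}\}$ of $\Herm_m$, using that the scalar $p(x)$ factors out of $\skl{M\,p(x)}{H_{i,j}}$. The paper handles general $M$ directly by pulling out $\skl{M}{H_{i,j}}$ and reassembling the sum, whereas you first reduce to $M = H_{k,l}$ and compute entry-wise with the trace, but this is only a cosmetic reorganization of the same computation.
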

\begin{proof}
By $\mu = \sum_{i, j = 1} H_{i, j} \cdot \mu_{i, j}'$ with $\mu_{i, j}' := \skl{\mu}{H_{i, j}}$, we obtain
\begin{align*}
	\int_K \skl{Mp(x)}{\diff\mu(x)}
	&= \sum_{i, j = 1}^m \int_K \skl{Mp(x)}{H_{i, j}} ~\diff\!\skl{\mu}{H_{i, j}}(x)
\\	&= \sum_{i, j = 1}^m \skl{M}{H_{i, j}} \int_K p(x) ~\diff\mu_{i, j}'(x)
\\	&= \skl{M}{\sum_{i, j = 1}^m \int_K p(x) ~\diff\mu_{i, j}'(x) H_{i, j}}
\\	&= \skl{M}{\int_K p(x) ~\diff\mu(x)}.\qedhere
\end{align*}
\end{proof}

We will use the following matrix version of Haviland's Theorem.

\begin{thm}[see \protect{\cite[Thm.\ 3]{cimpzalar13}}]
\label{thm:PositiveLambdaFunctionalExistsPositiveMeasure}
Let $m, n \in \nset$, $K \subseteq \rset^n$ be closed, and
let
\[L: \Herm_m[x_1, \dots, x_n] \to \rset\]
be a linear functional.
Then the following are equivalent:
\begin{enumerate}[(i)]
\item $L$ is a $K$-moment functional, i.e., there exists a positive matrix-valued measure $\mu$ on $K$ such that 
\[
	L(p) = \int_K \skl{p(x)}{\diff\mu(x)}
\]
for all $p \in \Herm_m[x_1, \dots, x_n]$.

\item $L(p) \geq 0$ for all $p \in \pos(K)$.
\end{enumerate}
\end{thm}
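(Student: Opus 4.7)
The plan is to treat the two implications separately. The forward direction $(i)\Rightarrow(ii)$ is an immediate consequence of \Cref{rem:PositiveMatrixValuedMeasureRadonNikodymDerivative}: rewriting
\[
L(p)=\int_K \skl{p(x)}{\rho(x)}~\diff\tr(\mu)(x)
\]
with $\rho(x)\in\Herm_{m,+}$ at $\tr(\mu)$-almost every $x\in K$, \Cref{lem:MatrixPositiveIffInnerProductWithAllPositiveMatricesIsPositive} applied pointwise yields $\skl{p(x)}{\rho(x)}\geq 0$ at $\tr(\mu)$-a.e.\ $x\in K$ for every $p\in\pos(K)$, so $L(p)\geq 0$.

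For the substantive direction $(ii)\Rightarrow(i)$, the strategy is to reduce to the classical (scalar) Haviland theorem along each positive rank-one direction of $\Herm_m$ and then reassemble the resulting scalar data into a single matrix-valued measure. For any $u\in\cset^m$ the rank-one matrix $uu^*$ lies in $\Herm_{m,+}$, so whenever a scalar polynomial $q\in\rset[x_1,\dots,x_n]$ satisfies $q\geq 0$ on $K$, the matrix polynomial $q\cdot uu^*$ lies in $\pos(K)$. Consequently, the scalar functional
\[
L_u:\rset[x_1,\dots,x_n]\to\rset,\qquad L_u(q):=L(q\cdot uu^*),
\]
is non-negative on all scalar $K$-positive polynomials, and the classical Haviland theorem supplies a positive Borel measure $\nu_u$ on $K$ with $L_u(q)=\int_K q~\diff\nu_u$ for every $q$.

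To reassemble, I would use polarization: for $u,v\in\cset^m$ define a signed complex measure
\[
\nu_{u,v}:=\tfrac{1}{4}\bigl(\nu_{u+v}-\nu_{u-v}+i\,\nu_{u-iv}-i\,\nu_{u+iv}\bigr),
\]
which is sesquilinear in $(u,v)$ and satisfies $\nu_{u,u}=\nu_u$. With $(e_k)_{k=1}^m$ the standard basis of $\cset^m$, set $\mu_{k,l}:=\nu_{e_k,e_l}$ and $\mu:=(\mu_{k,l})_{k,l=1}^m$. For every measurable $S\subseteq K$ one obtains $u^*\mu(S)u=\nu_u(S)\geq 0$, so $\mu(S)\in\Herm_{m,+}$, making $\mu$ a positive matrix-valued measure in the sense of \Cref{def:MatrixValuedMeasure}. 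Finally, using \Cref{lem:IntegralInnerProductSwap} and the basis $\{H_{k,l}\}$ of $\Herm_m$, the identity $L(p)=\int_K\skl{p(x)}{\diff\mu(x)}$ reduces to checking both sides on the elementary matrix polynomials $q\cdot H_{k,l}$, which is a direct computation from the definition of $\nu_u$ and the polarization formula.

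The principal obstacle is the polarization step: one has to verify that the above combination genuinely defines sesquilinear complex measures (rather than a merely formal expression tested against polynomials) and that the resulting family $(\mu_{k,l})_{k,l}$ inherits the countable additivity required by \Cref{def:MatrixValuedMeasure}. Once these measure-theoretic points are settled, the concluding identity on $\Herm_m[x_1,\dots,x_n]$ follows by linearity.
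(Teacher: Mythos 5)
Your forward direction is fine and matches the paper's use of \Cref{rem:PositiveMatrixValuedMeasureRadonNikodymDerivative}. The backward direction, however, has a genuine gap at exactly the point you flag as a remaining "measure-theoretic point": it is not a technicality that can be settled afterwards, it is where the argument breaks. Each $\nu_u$ is produced by an independent application of the scalar Haviland theorem to $L_u$, and on a non-compact closed $K$ the representing measure of $L_u$ need not be unique. Nothing in the construction forces the choices to be coherent across different $u$; the polarization identity $\nu_{u,u}=\nu_u$ and the sesquilinearity of $(u,v)\mapsto\nu_{u,v}$ hold only at the level of moments, not at the level of measures evaluated on Borel sets. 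Consequently the key inequality $u^*\mu(S)u=\nu_u(S)\geq 0$ is unjustified, and the positivity of $\mu$ --- which is the whole content of (i) --- does not follow. This is not a hypothetical failure: \Cref{exm:QAlphaMeasureNotPositive} in this very paper exhibits measures chosen independently for different directions of $\Herm_2$, each positive and each representing the correct functional, whose reassembly via the basis $\{H_{i,j}\}$ is a matrix-valued measure that fails to be positive on the set $(1,\infty)$. Your $\mu:=(\nu_{e_k,e_l})_{k,l}$ is vulnerable to precisely this log-normal--type indeterminacy.

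The paper (following Cimpri\v{c}--Zalar) avoids this by securing coherence \emph{before} any measure is produced: using \Cref{lem:CofinalityExtendsToTensorProduct}, the single functional $L$ is extended by the M.\ Riesz Extension Theorem from $\Herm_m\otimes\rset[x_1,\dots,x_n]$ to $\Herm_m\otimes F$, positive on $\Herm_{m,+}\otimes F_+$, where $F$ contains $\cat_c(\rset^n)$; the Riesz--Markov--Kakutani theorem then yields the representing measure. Because all directional measures are now obtained by restricting one extended functional to compactly supported continuous functions, where Radon measures \emph{are} determined, the polarization and reassembly you propose become legitimate. If you want to keep your rank-one reduction, you must first fix such an extension $\widetilde{L}$ and apply your argument to $\widetilde{L}$ rather than to $L$; as written, the proof does not go through.
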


The proof of \Cref{thm:PositiveLambdaFunctionalExistsPositiveMeasure} in \cite{cimpzalar13} is analog to the proof of Haviland's theorem. 
Using the M.\ Riesz Extension Theorem, the functional $L$ can be extended to a space containing continuous functions with compact support and by a version of the Riesz--Markov--Kakutani Theorem, this gives a suitable measure. 
The difference to the case with real coefficients is, that for Hermitian matrix coefficients the spaces will be tensor products. 
$\Herm_m[x_1, \dots, x_n]$ for example, can be written as a tensor product $\Herm_m[x_1, \dots, x_n] = \Herm_m \otimes \rset[x_1, \dots, x_n]$. 
We will look deeper into the theory behind this in \Cref{sec:TensorProductCones}.

If we not only work with
\begin{align*}
L: \Herm_m[x_1, \dots, x_n] &\to \rset
\intertext{but with}
L: \Herm_m[x_1, \dots, x_n] &\to \Herm_m
\end{align*}
in \Cref{sec:BorceaCharacterizationOnCompactSets}, then we need the following.

\begin{thm}[see \protect{\cite[Thm.\ 4 and 5]{cimpzalar13}}]\label{thm:PositiveMapCompactSetExistsPositiveMeasure}
Let $m, n \in \nset$, $K \subseteq \rset^n$ be compact, and let
\[L: \Herm_m[x_1, \dots, x_n] \to \Herm_m\]
be linear.
Then the following are equivalent:
\begin{enumerate}[(i)]
\item $L$ preserves positivity on $K$, i.e., $L(\Pos(K)) \succeq 0$.

\item There exists a unique positive Borel operator-valued measure
\[\mu: \fB(K) \to \cL(\Herm_m, \Herm_m)\]
on $K$ such that 
\[L(p) = \int_K p(x) ~\diff\mu(x)\]
for all $p \in \Herm_m[x_1, \dots, x_n]$.
\end{enumerate}
\end{thm}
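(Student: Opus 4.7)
\medskip
\noindent\textbf{Proof plan.}
The direction (ii)$\Rightarrow$(i) is the routine one. If $\mu:\fB(K)\to\cL(\Herm_m,\Herm_m)$ is positive in the sense that each $\mu(B)$ maps $\Herm_{m,+}$ into $\Herm_{m,+}$, and $p\succeq 0$ on $K$, then simple-function approximations $\sum_i \mu(B_i)(p(x_i))$ of $\int_K p(x)\,\diff\mu(x)$ lie in $\Herm_{m,+}$, and passing to the limit gives $L(p)=\int_K p\,\diff\mu\in\Herm_{m,+}$. So the substance is in (i)$\Rightarrow$(ii), which I would prove by reducing to the scalar-target case \Cref{thm:PositiveLambdaFunctionalExistsPositiveMeasure}.

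For the reduction, fix $M\in\Herm_{m,+}$ and define the linear functional
\[
    L_M:\Herm_m[x_1,\dots,x_n]\to\rset,\qquad L_M(p):=\skl{L(p)}{M}.
\]
By hypothesis $L(p)\in\Herm_{m,+}$ whenever $p\in\Pos(K)$, and then \Cref{lem:MatrixPositiveIffInnerProductWithAllPositiveMatricesIsPositive} gives $L_M(p)\geq 0$. So $L_M$ is $K$-positive and \Cref{thm:PositiveLambdaFunctionalExistsPositiveMeasure} produces a positive matrix-valued measure $\nu_M$ on $K$ with $L_M(p)=\int_K \skl{p(x)}{\diff\nu_M(x)}$. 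I would then extend the assignment $M\mapsto \nu_M$ to all of $\Herm_m$ via the decomposition $M=M^+-M^-$ by setting $\nu_M:=\nu_{M^+}-\nu_{M^-}$. Well-definedness and linearity follow once uniqueness of the scalar representation is established, and here compactness of $K$ is essential: polynomials are uniformly dense in $C(K)$ by Stone--Weierstrass, so each matrix component of $\nu_M$ is uniquely pinned down by its values against polynomials through a Riesz--Markov argument.

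With linearity of $M\mapsto \nu_M$ in hand, define $\mu:\fB(K)\to\cL(\Herm_m,\Herm_m)$ by $\mu(B)(M):=\nu_M(B)$. This map inherits $\sigma$-additivity from the $\nu_M$, and positivity of $\mu(B)$ follows since $M\succeq 0$ yields $\nu_M\succeq 0$ and hence $\nu_M(B)\in\Herm_{m,+}$. The integral identity is obtained by applying \Cref{lem:IntegralInnerProductSwap} dually: for any $M\in\Herm_m$,
\[
    \skl{\int_K p(x)\,\diff\mu(x)}{M} = \int_K \skl{p(x)}{\diff\nu_M(x)} = L_M(p) = \skl{L(p)}{M},
\]
so $L(p)=\int_K p\,\diff\mu$. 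Uniqueness of $\mu$ reduces to uniqueness of each $\nu_M$, which again rests on Stone--Weierstrass on the compact set $K$. The main obstacle I expect is precisely the well-definedness of $M\mapsto \nu_M$ as a linear map: one needs the scalar matrix-valued measures representing $L_{M^+}-L_{M^-}$ for different decompositions of $M$ to agree, and this is what breaks down in the non-compact case, because without compactness polynomials are no longer dense in the relevant space of continuous functions and the representing measure from \Cref{thm:PositiveLambdaFunctionalExistsPositiveMeasure} is no longer unique.
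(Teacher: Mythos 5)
First, a point of reference: the paper does not prove this statement itself but quotes it from Cimpri\v{c}--Zalar, whose argument goes through an extension theorem (Arveson's extension lemma in the general closed-$K$ version, where complete positivity enters) combined with an operator-valued Riesz representation; the paper explicitly remarks on this after \Cref{lem:IntegralPositiveOperatorValuedMeasurePositivePolynomialPositive}. Your reduction to the scalar-target matrix Haviland theorem \Cref{thm:PositiveLambdaFunctionalExistsPositiveMeasure}, followed by gluing the family $(\nu_M)_M$ into an operator-valued measure via uniqueness of representing measures on compact sets, is a genuinely different and more elementary route, and your diagnosis that the gluing (linearity of $M\mapsto\nu_M$) is exactly what fails for non-compact $K$ matches the paper's own discussion of why compactness is needed. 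For (ii)$\Rightarrow$(i) you do not need simple-function approximations, which sit awkwardly with the component-wise definition of the integral: \Cref{lem:IntegralPositiveOperatorValuedMeasurePositivePolynomialPositive} gives this direction directly.

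There is, however, a concrete error in your construction of $\mu$: setting $\mu(B)(M):=\nu_M(B)$ produces the operator-valued measure representing the \emph{transpose} of $L$, not $L$. Unwinding the definition of $\int_K p~\diff\mu$, the identity $L(p)=\int_K p~\diff\mu$ forces
\[
\skl{L(H_{i,j}\,q)}{H_{a,b}} \;=\; \int_K q~\diff\skl{\mu(\,\cdot\,)(H_{i,j})}{H_{a,b}}
\]
for all scalar polynomials $q$ and all basis elements, whereas your $\nu_{H_{i,j}}$ represents $L_{H_{i,j}}$, so that $\int_K q~\diff\skl{\nu_{H_{i,j}}}{H_{a,b}}=\skl{L(H_{a,b}\,q)}{H_{i,j}}$. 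These agree only if $\skl{L(Aq)}{B}=\skl{L(Bq)}{A}$ for all $A,B\in\Herm_m$, a symmetry a general linear $L$ does not have; your displayed chain $\skl{\int_K p~\diff\mu}{M}=\int_K\skl{p}{\diff\nu_M}$ silently uses it. The repair is to take the adjoint: define $\mu(B)$ by $\skl{\mu(B)(M)}{X}:=\skl{\nu_X(B)}{M}$ for $M,X\in\Herm_m$. Positivity survives, since for $M,X\in\Herm_{m,+}$ one has $\nu_X(B)\succeq0$ and hence $\skl{\nu_X(B)}{M}\geq0$ by \Cref{lem:MatrixPositiveIffInnerProductWithAllPositiveMatricesIsPositive}, which by the same lemma gives $\mu(B)(M)\succeq0$. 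With that correction, the remaining steps you outline (well-definedness of $\nu_{M^+}-\nu_{M^-}$ from uniqueness, $\sigma$-additivity, and uniqueness of $\mu$ via density of polynomials in $\cat(K)$) go through.
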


In the next subsection we want to discuss the M.\ Riesz Extension Theorem on tensor products such as the polynomial matrices
\[\Herm_m[x_1, \dots, x_n] = \Herm_m \otimes \rset[x_1, \dots, x_n],\]
which is used to prove \Cref{thm:PositiveLambdaFunctionalExistsPositiveMeasure} in \cite{cimpzalar13}.
The authors of \cite{cimpzalar13} use cone extensions on tensor products but only shortly mention the details.
Hence, because of the importance of the result, we want to look at it in more detail.

\subsection{Convex Cones on Tensor Products}
\label{sec:TensorProductCones}

\begin{definition}
Let $V_1, V_2$ be two real vector spaces and let $C_1 \subseteq V_1, C_2 \subseteq V_2$ be two convex cones. 
Define the \emph{minimal tensor product} as 
\[
C_1 \otimes C_2 = \conv\{c_1 \otimes c_2 \mid c_1 \in C_1, c_2 \in C_2\}.
\]
\end{definition}
The tensor product of convex cones $C_1 \otimes C_2$ then gives a convex cone in the tensor product of the overlying vector spaces $V_1 \otimes V_2$. 
Then
\[\Herm_{m,+} \subseteq \Herm_m \quad\text{and}\quad \rset[x_1, \dots, x_n]_+ \subseteq \rset[x_1, \dots, x_n]\]
are convex cones.
Hence,
\[\Herm_{m,+} \otimes \rset[x_1, \dots, x_n]_+\]
is a convex cone in
\[\Herm_m \otimes \rset[x_1, \dots, x_n] = \Herm_m[x_1, \dots, x_n].\]
%
Then
\[\Herm_{m,+} \otimes \rset[x_1, \dots, x_n]_+ \subseteq \Pos(\rset^n).\]

For the M.\ Riesz Extension Theorem, we need the following lemma.

\begin{lem}\label{lem:CofinalityExtendsToTensorProduct}
Let $V, F$ be real vector spaces and let $C \subseteq F$ be a convex cone.
Let $E \subseteq F$ be a linear subspace with $F = E + C$ and let $V_+ \subseteq V$ be a full-dimensional cone.
Then 
\[
V \otimes F = (V \otimes E) + (V_+ \otimes C).
\]
\end{lem}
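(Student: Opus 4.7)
The plan is to establish the nontrivial inclusion $V \otimes F \subseteq (V \otimes E) + (V_+ \otimes C)$; the reverse inclusion is immediate since $V \otimes E$ is a subspace and $V_+ \otimes C$ a subset of $V \otimes F$. Take an arbitrary $x = \sum_{i=1}^k v_i \otimes f_i \in V \otimes F$. Using $F = E + C$, I decompose each $f_i = e_i + c_i$ with $e_i \in E$, $c_i \in C$, which absorbs $\sum_i v_i \otimes e_i$ into $V \otimes E$ and reduces the task to showing that $y := \sum_{i=1}^k v_i \otimes c_i$ lies in $(V \otimes E) + (V_+ \otimes C)$. Full-dimensionality of $V_+$ gives $V = V_+ - V_+$, so I split $v_i = v_i^+ - v_i^-$ with $v_i^\pm \in V_+$. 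Then $y = \sum_i v_i^+ \otimes c_i - \sum_i v_i^- \otimes c_i$, where the positive part is immediately in $V_+ \otimes C$; the real trouble is the negative part.

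The key observation -- and the step I expect to be the main hurdle -- is to use the hypothesis $F = E + C$ a \emph{second} time, now applied to the negatives $-c_i \in F$. For each $i$ I write $-c_i = e_i' + c_i'$ with $e_i' \in E$ and $c_i' \in C$, which gives
\[
-v_i^- \otimes c_i \;=\; v_i^- \otimes (-c_i) \;=\; v_i^- \otimes e_i' + v_i^- \otimes c_i'.
\]
The first summand is in $V \otimes E$ and the second is in $V_+ \otimes C$, because $v_i^- \in V_+$. Summing over $i$ and collecting terms exhibits $x$ as the sum of an element of $V \otimes E$ and an element of $V_+ \otimes C$, which is the desired inclusion.

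A minor auxiliary point, used tacitly when collecting summands of the form $v \otimes c$ with $v \in V_+$ and $c \in C$, is that $V_+ \otimes C$ is itself a convex cone and hence closed under addition. This holds because the generating set of pure tensors $v \otimes c$ with $v \in V_+$ and $c \in C$ is already closed under scaling by $\lambda \geq 0$ (as $V_+$ and $C$ are cones), so its convex hull coincides with the conical hull, i.e., with the set of finite sums $\sum_j v_j \otimes c_j$ having $v_j \in V_+$, $c_j \in C$. Beyond this routine bookkeeping and the double application of $F = E + C$, I do not foresee any genuine difficulty.
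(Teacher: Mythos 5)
Your proof is correct, but it takes a different route from the paper's at the one point where the argument has any content. The paper exploits full-dimensionality of $V_+$ by choosing a basis $\{M_i\}$ of $V$ contained in $V_+$ and writing an arbitrary element of $V\otimes F$ as $\sum_k \widetilde M_k\otimes f_k$ with $\widetilde M_k\in V_+$ from the outset; after splitting $f_k=e_k+c_k$ there are no negative parts to worry about and the proof is a one-liner. You instead start from an arbitrary representation $\sum_i v_i\otimes f_i$, use full-dimensionality in the form $V=V_+-V_+$, and then must deal with the terms $-v_i^-\otimes c_i$; your resolution --- applying $F=E+C$ a second time to $-c_i$ to get $-c_i=e_i'+c_i'$ and hence $v_i^-\otimes(-c_i)=v_i^-\otimes e_i'+v_i^-\otimes c_i'$ --- is valid and correctly identifies where the difficulty sits. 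Your side remark that $V_+\otimes C$ (the convex hull of the pure tensors) coincides with the set of finite sums of such tensors is also needed by the paper's proof, where it is used tacitly. The paper's basis trick is shorter; your version makes the double use of cofinality explicit and does not require selecting a representation adapted to $V_+$, but both arguments rest on exactly the same two hypotheses and are of essentially equal strength.
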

\begin{proof}
Since $V_+ \subseteq V$ is a full-dimensional cone, there exists a basis $\{M_i\}_{i \in I}$ of $V$ for some Index set $I$ such that $M_i \in V_+$ for all $i \in I$. 
Now let $f \in V \otimes F$.
Then there exist
\[k \in \nset,\quad \widetilde{M}_1, \dots, \widetilde{M}_k \in \{M_i\}_{i \in I},\quad\text{and}\quad f_1, \dots, f_k \in F\]
such that 
\[
f = \widetilde{M}_1 \otimes f_1 + \dots + \widetilde{M}_k \otimes f_k.
\]
Now, for $f_1, \dots, f_k \in F$, there exist $e_1, \dots, e_k \in E$ and $c_1, \dots, c_k \in C$ such that 
\[
f_i = e_i + c_i
\]
for all $i = 1, \dots, k$. 
Hence,
\begin{align*}
	f
	&= \widetilde{M}_1 \otimes f_1 + \dots + \widetilde{M}_k \otimes f_k
\\	&= \widetilde{M}_1 \otimes e_1 + \dots + \widetilde{M}_k \otimes e_k
	+ \widetilde{M}_1 \otimes c_1 + \dots + \widetilde{M}_k \otimes c_k
\\	&\in V \otimes E + V_+ \otimes C.
\end{align*}
Since $f \in V \otimes F$ was chosen arbitrarily,
\[V \otimes F \subseteq V \otimes E + V_+ \otimes C.\]
The reverse inclusion holds trivially, since $V_+ \subseteq V$ and $E, C \subseteq F$.
Therefore,
\[
V \otimes F = (V \otimes E) + (V_+ \otimes C).\qedhere
\]
\end{proof}

Note that this lemma states that if $E \subseteq F$ is a cofinal subspace and $V_+ \subseteq V$ is full-dimensional, then $V \otimes E$ is cofinal in $V \otimes F$ with positivity cone $V_+ \otimes F_+$.
The \emph{M.\ Riesz Extension Theorem} now states that every linear functional
\[L: V \otimes E \to \rset\]
which is positive on $(V \otimes E) \cap (V_+ \otimes F_+)$ can be extended to a linear functional
\[\widetilde{L}: V \otimes F \to \rset\]
which is positive on $V_+ \otimes F_+$.

Returning to the polynomial matrix case, let $n, m \in \nset$ and define 
\[
F := \{f \in \cat(\rset^n) \mid \exists p \in \rset[x_1, \dots, x_n]: |f| \leq p\ \text{on}\ \rset^n\}
\]
as well as 
\[
F_+ := \{f \in F \mid f \geq 0\ \text{on}\ \rset^n\}.
\]
Then the continuous functions with compact support $\cat_c(\rset^n)$ are contained in $F$ and 
\[
F = \rset[x_1, \dots, x_n] + F_+.
\]
By \Cref{lem:CofinalityExtendsToTensorProduct},
\[
\Herm_m \otimes F = (\Herm_m \otimes \rset[x_1, \dots, x_n]) + (\Herm_{m,+} \otimes F_+).
\]
Notice that 
\[
	\Herm_{m,+} \otimes \rset[x_1, \dots, x_n]_+ \subseteq (\Herm_m \otimes \rset[x_1, \dots, x_n]) \cap (\Herm_{m,+} \otimes F_+)
\]
as well as 
\[
	(\Herm_m \otimes \rset[x_1, \dots, x_n]) \cap (\Herm_{m,+} \otimes F_+) \subseteq \Pos(\rset^n).
\]
The latter is simply because elements $P \in \Herm_m[x_1, \dots, x_n] = \Herm_m \otimes \rset[x_1, \dots, x_n]$ are matrices with polynomials as entries and elements $f \in \Herm_{m,+} \otimes F_+$ are positive semi-definite at every point on $\rset^n$. 
Hence, every function in the intersection is a polynomial matrix which is positive semi-definite on $\rset^n$, which is exactly the definition of $\Pos(\rset^n)$.
Now every linear functional
\[L: \Herm_m[x_1, \dots, x_n] \to \rset\]
such that $L$ is positive on $\pos(\rset^n)$ can be extended to a linear functional
\[\widetilde{L}: \Herm_m \otimes F\]
such that $\widetilde{L}$ is positive on $\Herm_{m,+} \otimes F_+$. 
With $\cat_c(\rset^n) \subseteq F$ and by (a version of) the Riesz--Markov--Kakutani Theorem, there exists a measure which represents $\widetilde{L}$. 
This is how \Cref{thm:PositiveLambdaFunctionalExistsPositiveMeasure} is proven in \cite{cimpzalar13}.

\section{The Canonical Representation of Linear Operators on Matrix Polynomials}
\label{sec:NetzerCharacterizationLinOp}

At first we give an extension of the canonical representation (\ref{eq:DiffSumReprNonMatrixOp}) to any linear operator $T:V[x_1,\dots,x_n]\to V[x_1,\dots,x_n]$ for an arbitrary real vector space $V$.

\begin{lem}\label{lem:DiffSumRepresentationRLinearOp}
Let $n \in \nset$ and let $V$ be a real vector space. 
Let
\[T: V[x_1, \dots, x_n] \to V[x_1, \dots, x_n]\]
be a linear operator, i.e., $T(af+g) = a\cdot Tf + Tg$ for all $a\in\rset$ and $f,g\in V[x_1, \dots, x_n]$. 
Then, for all $\alpha \in \nset_0^n$, there exist unique linear operators
\[Q_\alpha: V \to V[x_1, \dots, x_n]\]
such that
\begin{equation}\label{eq:DiffSumRepresentationQalpha}
T = \sum_{\alpha \in \nset_0^n} \frac{1}{\alpha!}\cdot Q_\alpha \times \partial^\alpha
\quad\text{with}\quad
(Q_\alpha \times \partial^\alpha)(v\cdot x^\beta) := Q_\alpha(v)\cdot\partial^\alpha x^\beta
\end{equation}
for all $\alpha, \beta \in \nset_0^n$ and $v \in V$.
\end{lem}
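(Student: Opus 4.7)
The plan is to mimic the scalar proof from \cite{netzer10}, constructing the $Q_\alpha$ inductively on $|\alpha|$ so that the identity is forced monomial by monomial, and then extending by linearity. The key identity to keep in mind is $\partial^\alpha x^\beta = \frac{\beta!}{(\beta-\alpha)!} x^{\beta-\alpha}$ for $\alpha \leq \beta$ (componentwise) and $0$ otherwise, equivalently $\frac{1}{\alpha!}\partial^\alpha x^\beta = \binom{\beta}{\alpha} x^{\beta-\alpha}$. So if a representation of $T$ as in \eqref{eq:DiffSumRepresentationQalpha} exists, it must satisfy
\[
T(v\cdot x^\beta) = \sum_{\alpha \leq \beta} \binom{\beta}{\alpha} Q_\alpha(v) \cdot x^{\beta-\alpha}
\]
for every $v\in V$ and $\beta\in\nset_0^n$. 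This relation can be solved for $Q_\beta(v)$ in terms of $T$ and the lower-order $Q_\alpha(v)$, which is what drives the induction.

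\emph{Construction and linearity.} I would set $Q_0(v) := T(v)$ and, for $|\alpha|\geq 1$, define recursively
\[
Q_\alpha(v) \ :=\ T(v\cdot x^\alpha) \ -\ \sum_{\beta\leq\alpha,\ \beta\neq\alpha} \binom{\alpha}{\beta}\, Q_\beta(v)\cdot x^{\alpha-\beta}.
\]
Since each term on the right lies in $V[x_1,\dots,x_n]$, so does $Q_\alpha(v)$. Linearity of $Q_\alpha$ in $v$ follows by induction: $Q_0$ inherits linearity from $T$, and in the inductive step $v\mapsto T(v\cdot x^\alpha)$ is linear by linearity of $T$ while each subtracted term is linear by the inductive hypothesis. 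Rearranging the definition gives back the displayed monomial identity above.

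\emph{Representation on all polynomials and well-definedness.} Every $p\in V[x_1,\dots,x_n]$ is a finite sum $p=\sum_\beta v_\beta\cdot x^\beta$. For such a fixed $p$, only multi-indices $\alpha$ with $|\alpha|\leq \deg p$ can contribute to $\sum_\alpha \frac{1}{\alpha!}(Q_\alpha\times\partial^\alpha)(p)$, so the sum is finite on each polynomial and hence well-defined as an operator on $V[x_1,\dots,x_n]$. Linearity of $T$ together with the monomial identity then yields
\[
T(p) \ =\ \sum_\beta T(v_\beta\cdot x^\beta) \ =\ \sum_\beta \sum_{\alpha\leq\beta} \binom{\beta}{\alpha} Q_\alpha(v_\beta)\, x^{\beta-\alpha} \ =\ \sum_\alpha \frac{1}{\alpha!}\,(Q_\alpha\times\partial^\alpha)(p),
\]
which is the desired representation \eqref{eq:DiffSumRepresentationQalpha}.

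\emph{Uniqueness and obstacles.} If $\{Q_\alpha'\}$ is another family representing $T$, then evaluating $T(v\cdot x^\alpha)$ via both representations and inducting on $|\alpha|$ forces $Q_\alpha(v) = Q_\alpha'(v)$: for $\alpha=0$ the only surviving term is $Q_0(v)=T(v)=Q_0'(v)$, and in the inductive step the lower-order terms cancel by hypothesis, leaving $Q_\alpha(v)=Q_\alpha'(v)$. I do not expect any genuine obstacle here — the argument is essentially the same as in the scalar case, the only point requiring a little care is to check that the definition of $Q_\alpha\times\partial^\alpha$ on arbitrary polynomials (rather than just on pure tensors $v\cdot x^\beta$) is consistent, but this is immediate once linearity in $v$ is established, because every element of $V[x_1,\dots,x_n]$ is a finite $\rset$-linear combination of such pure tensors.
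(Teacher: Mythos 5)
Your proposal is correct and follows essentially the same route as the paper: the same recursive definition $Q_\gamma(v) := T(vx^\gamma) - \sum_{\alpha \preceq \gamma,\, \alpha \neq \gamma}\binom{\gamma}{\alpha}Q_\alpha(v)\,x^{\gamma-\alpha}$ driven by induction on $|\gamma|$, with linearity and uniqueness inherited at each step. The only difference is that you spell out the well-definedness of the (locally finite) sum and the extension to general polynomials, which the paper leaves implicit.
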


The representation (\ref{eq:DiffSumRepresentationQalpha}) is called \emph{canonical representation}.
The proof is a straight-forward extension of the proof in \cite[Lem.\ 2.3]{netzer10}.

\begin{proof}
We will prove \Cref{lem:DiffSumRepresentationRLinearOp} using induction over $d = |\alpha| \in \nset_0$ such that
\begin{equation}\label{eq:NetzerProofRecursionFormula}
T(vx^\beta) = \sum_{\alpha \in \nset_0^n} \frac{1}{\alpha!}\cdot (Q_\alpha \times \partial^\alpha)(vx^\beta) 
= \sum_{\alpha \preceq \beta} \binom{\beta}{\alpha}\cdot Q_\alpha(v)\cdot x^{\beta - \alpha}
\end{equation}
for all $\beta \in \nset_0^n$ with $|\beta| \leq d$ and $v\in V$.

\underline{$d = 0$:} 
For $\beta = 0$, clearly,
\[Q_0:V\to V[x_1,\dots,x_n],\quad v\mapsto Q_0(v) := T(v\cdot 1)\]
is the unique linear $Q_0$ in (\ref{eq:NetzerProofRecursionFormula}).

\underline{$d \to d + 1$:}
Assume, for all $\alpha\in\nset_0^n$ with $|\alpha| \leq d$, there exist unique linear $Q_\alpha:V\to \in V[x_1, \dots, x_n]$ in (\ref{eq:NetzerProofRecursionFormula}), i.e., for all $|\beta| \leq d$.
Now let $\gamma \in \nset_0^n$ with $|\gamma| = d+1$. 
According to (\ref{eq:NetzerProofRecursionFormula}), we require $Q_\gamma:V\to\in V[x_1, \dots, x_n]$ to fulfill
\[
T(vx^\gamma) = Q_\gamma(v) + \sum_{\substack{\alpha \preceq \gamma \\ \alpha \neq \gamma}} \binom{\gamma}{\alpha}\cdot Q_\alpha(v)\cdot x^{\gamma-\alpha}
\]
for all $v\in V$.
Hence,
\[Q_\gamma:V\to V[x_1,\dots,x_n],\quad v\mapsto Q_\gamma(v) := T(vx^\gamma) - \sum_{\substack{\alpha \preceq \gamma \\ \alpha \neq \gamma}} \binom{\gamma}{\alpha}\cdot Q_\alpha(v)\cdot x^{\gamma-\alpha}\]
is unique.
\end{proof}

\begin{rem}
The $Q_\alpha$ for $\alpha\in\nset_0^n$ from \Cref{lem:DiffSumRepresentationRLinearOp} are constructed recursively. 
Using the binomial transform, we get the explicit form
\[
Q_\beta(v) = \sum_{\alpha \preceq \beta} \binom{\beta}{\alpha}\cdot (-1)^{\beta-\alpha}\cdot T(vx^\alpha)\cdot x^{\beta-\alpha}
\]
for all $\beta \in \nset_0^n$ and $v \in V$.
\exmsymbol
\end{rem}

We get the following immediate consequence.

\begin{cor}\label{cor:TreprUnitalAlegbra}
Let $n\in\nset$, let $A$ be a unitial algebra over $\rset$, and let
\[T:A[x_1,\dots,x_n]\to A[x_1,\dots,x_n]\]
be a $\rset$-linear map with
\[T(p\cdot a) = (Tp)\cdot a\]
for all $p\in A[x_1,\dots,x_n]$ and $a\in A$.
Then, for every $\alpha\in\nset_0^n$, there exists a unique $q_\alpha\in A[x_1,\dots,x_n]$ such that
\[T = \sum_{\alpha\in\nset_0^n} \frac{1}{\alpha!}\cdot q_\alpha\cdot\partial^\alpha.\]
\end{cor}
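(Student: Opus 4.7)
The plan is to invoke \Cref{lem:DiffSumRepresentationRLinearOp} with the real vector space $V := A$ (forgetting, for the moment, the multiplicative structure of $A$). This yields unique $\rset$-linear maps $Q_\alpha : A \to A[x_1,\dots,x_n]$ such that $T = \sum_\alpha \frac{1}{\alpha!}\, Q_\alpha \times \partial^\alpha$. It then only remains to show that each $Q_\alpha$ is simply left multiplication by a fixed polynomial $q_\alpha \in A[x_1,\dots,x_n]$; the natural candidate is $q_\alpha := Q_\alpha(1_A)$, where $1_A$ is the unit of $A$.

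To verify that $Q_\alpha(v) = q_\alpha \cdot v$ for every $v \in A$, I would proceed by induction on $|\alpha|$, using the explicit recursion from the proof of \Cref{lem:DiffSumRepresentationRLinearOp}. The two ingredients are (i) the right $A$-linearity assumption $T(p\cdot a) = (Tp)\cdot a$ and (ii) the fact that constants $v \in A$ commute with the variables $x_1,\dots,x_n$ inside $A[x_1,\dots,x_n]$, so $v\cdot x^\gamma = x^\gamma\cdot v$. Together these give $T(v\,x^\gamma) = T(x^\gamma)\cdot v$. Expanding both sides via the recursion for the $Q_\alpha$ and applying the inductive hypothesis $Q_\alpha(v) = q_\alpha\cdot v$ for all $\alpha \prec \gamma$, one finds that the lower-order terms cancel after pushing $v$ to the right (again using centrality of the $x_i$), and what remains is exactly $Q_\gamma(v) = Q_\gamma(1_A)\cdot v = q_\gamma\cdot v$.

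Once $Q_\alpha = q_\alpha\cdot(\,\cdot\,)$ is established, the required decomposition follows at once: for every $v \in A$ and $\beta \in \nset_0^n$,
\[
	(Q_\alpha \times \partial^\alpha)(v\,x^\beta)
	= Q_\alpha(v)\cdot \partial^\alpha x^\beta
	= q_\alpha\cdot v\cdot \partial^\alpha x^\beta
	= q_\alpha\cdot \partial^\alpha(v\,x^\beta),
\]
since $v$ is a constant with respect to $\partial^\alpha$. Hence $T = \sum_\alpha \frac{1}{\alpha!}\, q_\alpha\cdot \partial^\alpha$, and the uniqueness of the $q_\alpha$ is inherited directly from the uniqueness of the $Q_\alpha$ in \Cref{lem:DiffSumRepresentationRLinearOp}, because any representation $T = \sum_\alpha \frac{1}{\alpha!}\, q_\alpha'\cdot \partial^\alpha$ would force $Q_\alpha(\,\cdot\,) = q_\alpha'\cdot(\,\cdot\,)$ and in particular $q_\alpha' = Q_\alpha(1_A) = q_\alpha$.

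The only delicate point is the possible non-commutativity of $A$: one must keep left and right multiplications cleanly separated and exploit precisely the centrality of $x_1,\dots,x_n$ in $A[x_1,\dots,x_n]$ to move $v$ past $x^\gamma$ in the induction step. Beyond this bookkeeping I do not expect any genuine obstacle; the corollary is essentially a direct consequence of \Cref{lem:DiffSumRepresentationRLinearOp} combined with the hypothesis that $T$ is right $A$-linear.
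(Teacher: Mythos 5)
Your proposal is correct and follows essentially the same route as the paper: apply \Cref{lem:DiffSumRepresentationRLinearOp} with $V=A$ and then show $Q_\alpha(v)=Q_\alpha(1_A)\cdot v$ using the right $A$-linearity of $T$ together with the centrality of the variables. The paper compresses this into the single line $Q_\alpha(a)=Q_\alpha(1\cdot a)=Q_\alpha(1)\cdot a$; your induction via the recursion merely spells out the detail that justifies that step.
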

\begin{proof}
By \Cref{lem:DiffSumRepresentationRLinearOp} and $T(p\cdot a) = (Tp)\cdot a$ for all $p\in A[x_1,\dots,x_n]$ and $a\in A$, $Q_\alpha(a) = Q_\alpha(1\cdot a) = Q_\alpha(1)\cdot a$ for all $a\in A$ and $q_\alpha := Q_\alpha(1)\in A[x_1,\dots,x_n]$.
\end{proof}

The following example shows, that when $V=A$ is \emph{not} a unitial algebra, then in general the operators $Q_\alpha$ in \Cref{lem:DiffSumRepresentationRLinearOp} are \emph{not} multiplication by polynomials $q_\alpha\in A[x_1,\dots,x_n]$ like in \Cref{cor:TreprUnitalAlegbra}.
This happens already for $V = \Herm_2$.

\begin{exm}
Let $n=1$ and let
\[T: \Herm_2[x]\to\Herm_2[x],\quad \begin{pmatrix} f & g\\ \overline{g} & h\end{pmatrix}\mapsto \begin{pmatrix}h & g\\ \overline{g} & f\end{pmatrix}.\]
Then $T = Q_0\times \partial^0$, since $T(p \cdot x^k) = (Tp)\cdot x^k$ for all $p\in\Herm_2[x]$ and $k\in\nset_0$.
But
\[Q_0\!\!\left( \begin{pmatrix} a & b\\ \overline{b} & c\end{pmatrix}\right) = \begin{pmatrix}
c & b\\ \overline{b} & a\end{pmatrix} \]
for all $a,c\in\rset$ and $b\in\cset$ implies that there exists no $q_0\in\Herm_2[x]$ such that
\[q_0\cdot \begin{pmatrix} a & b\\ \overline{b} & c\end{pmatrix} = \begin{pmatrix}
c & b\\ \overline{b} & a\end{pmatrix},\]
i.e., the $Q_\alpha$ are no multiplication operators as in \Cref{cor:TreprUnitalAlegbra}.
\exmsymbol
\end{exm}

While the previous example shows that there exist $\rset$-linear maps on $\Herm_m[x_1,\dots,x_n]$ such that the $Q_\alpha$ in \Cref{lem:DiffSumRepresentationRLinearOp} are not multiplication operators, the next result describes all $\rset$-linear maps on $\Herm_m[x_1,\dots,x_n]$ where all $Q_\alpha$ are multiplication operators.

\begin{prop}
Let $n,m\in\nset$ and let
\[T:\Herm_m[x_1,\dots,x_n]\to\Herm_m[x_1,\dots,x_n]\]
be given by
\[T = \sum_{\alpha\in\nset_0^n} \frac{1}{\alpha!}\cdot q_\alpha\cdot\partial^\alpha\]
with $q_\alpha\in\cset^{m\times m}[x_1,\dots,x_n]$ for all $\alpha\in\nset_0^n$.
Then
\[q_\alpha = \id\cdot p_\alpha\]
with $p_\alpha\in\rset[x_1,\dots,x_n]$ for all $\alpha\in\nset_0^n$ and $\id$ is the $m\times m$-identity matrix.
\end{prop}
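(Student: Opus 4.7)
The plan is to isolate the coefficient $q_\alpha$ pointwise in $y\in\rset^n$ by evaluating $T$ on a test polynomial that annihilates every other summand, and then to invoke a small algebraic lemma describing which matrices $M\in\cset^{m\times m}$ satisfy $M\cdot\Herm_m\subseteq\Herm_m$.

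First, fix $\alpha\in\nset_0^n$, $y\in\rset^n$, and $H\in\Herm_m$, and consider the test polynomial $p:=H\cdot(x-y)^\alpha\in\Herm_m[x_1,\dots,x_n]$. A direct computation gives $\partial^\beta p(y)=\alpha!\cdot H$ when $\beta=\alpha$ and $\partial^\beta p(y)=0$ otherwise: for $\beta\precneq\alpha$ the factor $(x-y)^{\alpha-\beta}$ vanishes at $x=y$, and for $\beta\not\preceq\alpha$ the derivative itself is identically zero. Hence only the $\beta=\alpha$ term survives and
\[
  (Tp)(y)\;=\;\sum_{\beta\in\nset_0^n}\frac{1}{\beta!}\,q_\beta(y)\cdot\partial^\beta p(y)\;=\;q_\alpha(y)\cdot H.
\]
By hypothesis $Tp\in\Herm_m[x_1,\dots,x_n]$, so the left-hand side lies in $\Herm_m$. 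We conclude that $q_\alpha(y)\cdot H\in\Herm_m$ for every $H\in\Herm_m$.

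The key algebraic step is the following lemma, which I would prove directly from the basis $\{H_{k,l}\}$: if $M\in\cset^{m\times m}$ satisfies $MH\in\Herm_m$ for all $H\in\Herm_m$, then $M=c\cdot\id$ with some $c\in\rset$. Testing against $H=E_{k,k}=H_{k,k}$ forces $M$ to be diagonal with real entries (the off-diagonal entries of $M E_{k,k}$, which sit in column $k$, must vanish by Hermiticity), and subsequently testing against $H_{k,l}$ for $k<l$ forces $M_{k,k}=M_{l,l}$. Applied to $M=q_\alpha(y)$, this yields a real number $p_\alpha(y)$ with $q_\alpha(y)=p_\alpha(y)\cdot\id$.

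Since $y\in\rset^n$ was arbitrary, $p_\alpha$ coincides on $\rset^n$ with the $(1,1)$-entry of $q_\alpha$; this is a complex polynomial taking only real values on $\rset^n$, hence has real coefficients, so $p_\alpha\in\rset[x_1,\dots,x_n]$ and $q_\alpha=\id\cdot p_\alpha$ as claimed. I do not anticipate a genuine obstacle: the derivative computation at $x=y$ and the small matrix-algebra lemma are routine, and the essential content of the proposition is simply that left-multiplication by a fixed matrix preserves $\Herm_m$ only for real multiples of the identity.
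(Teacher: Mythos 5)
Your proposal is correct, and it reaches the same algebraic heart as the paper --- the fact that a matrix $M\in\cset^{m\times m}$ with $M\cdot\Herm_m\subseteq\Herm_m$ must be a real multiple of $\id$ --- but it gets there by a different route. The paper argues by induction on $d=|\alpha|$: it applies $T$ to the monomials $Bx^\beta$ and subtracts the already-identified lower-order contributions $\sum_{|\alpha|\le d}\frac{1}{\alpha!}q_\alpha\partial^\alpha$ to isolate $q_\beta B$, which requires the induction hypothesis to know that the subtracted terms are again Hermitian-valued. You instead isolate $q_\alpha(y)H$ in one shot by evaluating $T$ on the shifted test polynomial $H\cdot(x-y)^\alpha$ at the point $y$, which kills every other summand of the canonical representation; this avoids the induction entirely and is, if anything, cleaner. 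You also supply two details the paper leaves implicit: a proof of the matrix lemma (the paper merely notes it, and states it with the slightly too restrictive hypothesis $A\in\Herm_m$ where $A\in\cset^{m\times m}$ is what is actually needed, since a priori $q_\alpha(y)$ is only a complex matrix), and the final observation that the pointwise scalar $p_\alpha(y)$, being the $(1,1)$-entry of $q_\alpha$ and real-valued on all of $\rset^n$, is a polynomial with real coefficients. Both of these fills are correct, and your computation $\partial^\beta\bigl(H(x-y)^\alpha\bigr)(y)=\alpha!\,H\,\delta_{\alpha\beta}$ is accurate.
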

\begin{proof}
We prove the statement via induction over $d:=|\alpha|\in\nset_0$.
Note, if $A\in\Herm_m$ and $AB\in\Herm_m$ for all $B\in\Herm_m$, then $A = c\cdot\id$ with $c\in\rset$.

\underline{$d=0$:}
Since $TB = q_0B \in\Herm_m[x_1,\dots,x_n]$ for all $B\in\Herm_m$, $q_0 = \id\cdot p_0$ with $p_0\in\rset[x_1,\dots,x_n]$.

\underline{$d\to d+1$:}
Let $\beta\in\nset_0^n$ with $|\beta|=d+1$.
Since
\[q_\beta B = \left(T - \sum_{\alpha\in\nset_0^n:|\alpha|\leq d} \frac{1}{\alpha!}\cdot q_\alpha\cdot\partial^\alpha \right)\!\!\big(Bx^\beta\big) \quad\in \Herm_m[x_1,\dots,x_n]\]
for all $B\in\Herm_m$, again $q_\beta = \id\cdot p_\beta$ for some $p_\beta\in\rset[x_1,\dots,x_n]$.
\end{proof}

\section{$\rset^n$-Positivity Preservers on Matrix Polynomials}
\label{sec:BorceaCharacterizationOnRn}

Before we can state and prove the main theorem of this section, we need the following.

\begin{dfn}\label{dfn:TyM}
Let $n,m\in\nset$ and
\[
	T = \sum_{\alpha \in \nset_0^n} \frac{1}{\alpha!}\cdot (Q_\alpha \times \partial^\alpha) 
\]
with linear $Q_\alpha: \Herm_m \to \Herm_m[x_1, \dots, x_n]$ for all $\alpha \in \nset_0^n$.
For $y\in\rset^n$, we define
\[T_y: \Herm_m[x_1, \dots, x_n] \to \Herm_m[x_1, \dots, x_n]\]
by
\[
	T_y := \sum_{\alpha \in \nset_0^n} \frac{1}{\alpha!} (Q_\alpha(~\cdot~)(y) \times \partial^\alpha),
\]
i.e., $(Tp)(y) = (T_yp)(y)$ for all $p \in \Herm_m[x_1, \dots, x_n]$ and $y \in \rset^n$.
Additionally, for $y \in \rset^n$ and $M \in\Herm_m$, we define
\[T_{y, M}: \Herm_m[x_1, \dots, x_n] \to \rset[x_1, \dots, x_n],\quad p\mapsto T_{y, M}p := \skl{T_y p}{M}.\]
\end{dfn}

Now we state the main theorem of this section.
It characterizes $\rset^n$-positivity preservers of matrix polynomials similar to \cite[Theorem 3.1]{borcea11}.
The main difference is that the coefficients $Q_\alpha$ in the canonical representation are not given as moments of a positive matrix-valued measure, but as moments of a finite signed matrix-valued measure constructed from positive matrix-valued measures.

\begin{thm}\label{thm:BorceaCharacterizationPolynomialMatrices}
Let $m, n \in \nset$ and let
\[
T:\Herm_m[x_1,\dots,x_n]\to\Herm_m[x_1,\dots,x_n],\quad T = \sum_{\alpha \in \nset_0^n} \frac{1}{\alpha!}\cdot (Q_\alpha \times \partial^\alpha)
\]
be linear with linear $Q_\alpha:\Herm_m\to\Herm_m[x_1,\dots,x_n]$ for all $\alpha\in\nset_0^n$.
Then the following are equivalent:
\begin{enumerate}[(i)]
\item $T\pos(\rset^n)\subseteq\pos(\rset^n)$.

\item For all $y\in\rset^n$, $T_y\pos(\rset^n)\subseteq\pos(\rset^n)$.

\item For all $y \in \rset^n$ and $M \in \Herm_{m,+}$,
\[T_{y, M}\pos(\rset^n)\subseteq \big\{f\in\rset[x_1,\dots,x_n] \,\big|\, f\geq 0\ \text{on}\ \rset^n\big\}.\]

\item For all $y\in\rset^n$ and $M\in\Herm_m$, there exist matrix valued measures $\mu_{y, M}$ such that
\begin{enumerate}[(a)]
\item if $M\in\Herm_{m,+}$, then $\mu_{y,M}$ is positive,

\item for all $p \in \Herm_m[x_1, \dots, x_n]$,
\[
	(T_{y, M}p)(x) = \int \skl{p(x+t)}{\diff\mu_{y, M}(t)}.
\]
\end{enumerate}
In this case, for all $\beta \in \nset_0^n$, $y \in \rset^n$, and $M \in\Herm_m$, 
\begin{equation}\label{eq:QBetaIntegralMuTildeMeasure}
Q_\beta(M)(y) = \int t^\beta ~\diff\widetilde{\mu}_{y, M}(t) \quad\text{with}\quad \widetilde{\mu}_{y, M} := \sum_{i, j = 1}^m H_{i, j} \cdot \skl{\mu_{y, H_{i, j}}}{M}.
\end{equation}
\end{enumerate}
\end{thm}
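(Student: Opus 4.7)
The plan is to reduce the matrix-valued positivity question pointwise to scalar positivity questions on $\rset^n$ and then invoke the matrix Haviland theorem (\Cref{thm:PositiveLambdaFunctionalExistsPositiveMeasure}), mimicking Borcea's original strategy with the matrix pairing $\langle\,\cdot\,,\,\cdot\,\rangle$ doing the bookkeeping.

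First I would establish $(i)\Leftrightarrow(ii)$ via the translation identity $(Tp_a)(y) = (T_yp)(y+a)$ with $p_a(x):=p(x+a)$, which follows directly from \Cref{dfn:TyM} together with $\partial^\alpha[q(\,\cdot\,+a)]=(\partial^\alpha q)(\,\cdot\,+a)$. The implication $(ii)\Rightarrow(i)$ is then just the case $a=0$, while for $(i)\Rightarrow(ii)$ one notes that $p\succeq 0$ on $\rset^n$ implies $p_a\succeq 0$ on $\rset^n$, so letting $a$ range over $\rset^n$ converts $Tp_a\succeq 0$ into $T_yp\succeq 0$. The equivalence $(ii)\Leftrightarrow(iii)$ is then a pointwise application of \Cref{lem:MatrixPositiveIffInnerProductWithAllPositiveMatricesIsPositive}: $(T_yp)(x)\in\Herm_{m,+}$ for all $x$ if and only if $(T_{y,M}p)(x)=\skl{(T_yp)(x)}{M}\geq 0$ for all $x\in\rset^n$ and $M\in\Herm_{m,+}$.

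For $(iii)\Rightarrow(iv)$, fix $y\in\rset^n$ and $M\in\Herm_{m,+}$ and consider the functional $L_{y,M}(p):=(T_{y,M}p)(0)$. By $(iii)$ it is non-negative on $\pos(\rset^n)$, so \Cref{thm:PositiveLambdaFunctionalExistsPositiveMeasure} yields a positive matrix-valued measure $\mu_{y,M}$ with $L_{y,M}(p)=\int\skl{p(t)}{\diff\mu_{y,M}(t)}$. The crucial observation is that $T_{y,M}=\sum_\alpha\tfrac{1}{\alpha!}\skl{Q_\alpha(\,\cdot\,)(y)}{M}\,\partial^\alpha$ has constant real coefficients in $x$ and therefore commutes with translation: $(T_{y,M}[p(\,\cdot\,+x)])(0)=(T_{y,M}p)(x)$. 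Substituting $p(\,\cdot\,+x)$ into the integral representation for $L_{y,M}$ yields $(iv)(b)$. For general $M\in\Herm_m$ I extend by linearity in $M$, e.g.\ via $\mu_{y,M}:=\sum_{i,j}\skl{M}{H_{i,j}}\mu_{y,H_{i,j}}$; the positivity in (a) is only claimed on the positive cone. The converse $(iv)\Rightarrow(iii)$ is then immediate from \Cref{rem:PositiveMatrixValuedMeasureRadonNikodymDerivative}: for $M\in\Herm_{m,+}$ and $p\in\pos(\rset^n)$ the integrand $p(x+t)$ is pointwise positive semi-definite and $\mu_{y,M}$ is positive, so the integral is non-negative for every $x$.

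The moment formula (\ref{eq:QBetaIntegralMuTildeMeasure}) I would extract by testing $(iv)(b)$ at $x=0$ on the matrix monomial $p(t)=N\cdot t^\beta$: the left-hand side equals $\skl{Q_\beta(N)(y)}{M}$, and \Cref{lem:IntegralInnerProductSwap} rewrites the right-hand side as $\skl{N}{\int t^\beta\,\diff\mu_{y,M}(t)}$. Specializing this identity (after linear extension) with the new choices $N:=M$ and $M:=H_{i,j}$ gives $\skl{Q_\beta(M)(y)}{H_{i,j}}=\skl{M}{\int t^\beta\,\diff\mu_{y,H_{i,j}}(t)}$, and expanding $Q_\beta(M)(y)=\sum_{i,j}\skl{Q_\beta(M)(y)}{H_{i,j}}H_{i,j}$ in the orthonormal basis of Section~2.1 assembles precisely $\widetilde{\mu}_{y,M}=\sum_{i,j}H_{i,j}\cdot\skl{\mu_{y,H_{i,j}}}{M}$. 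The main obstacle I anticipate is the bookkeeping in this final dualization step — correctly swapping the argument $M$ of $Q_\beta$ with the index of the measure family — rather than any deep technical issue; everything else is a direct matrix-coefficient upgrade of Borcea's scalar proof, with \Cref{thm:PositiveLambdaFunctionalExistsPositiveMeasure} replacing the classical Haviland theorem.
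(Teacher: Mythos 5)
Your proposal is correct and follows essentially the same route as the paper: (i)$\Leftrightarrow$(ii) via commutation of $T_y$ with translations, (ii)$\Leftrightarrow$(iii) via \Cref{lem:MatrixPositiveIffInnerProductWithAllPositiveMatricesIsPositive}, (iii)$\Leftrightarrow$(iv) via the functional $L_{y,M}(p)=(T_{y,M}p)(0)$ and the matrix Haviland theorem \Cref{thm:PositiveLambdaFunctionalExistsPositiveMeasure} with linear extension to all of $\Herm_m$ using full-dimensionality of $\Herm_{m,+}$, and the moment formula (\ref{eq:QBetaIntegralMuTildeMeasure}) by the same dualization over the basis $H_{i,j}$ using \Cref{lem:IntegralInnerProductSwap}. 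The only cosmetic difference is that you write the extension to general $M$ as an explicit sum over the $H_{i,j}$ rather than the paper's appeal to a psd basis, which changes nothing of substance.
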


\Cref{thm:BorceaCharacterizationPolynomialMatrices} will be proven in the following manner.
The proof of the equivalence (i) $\Leftrightarrow$ (ii) is an adapted version of the $\rset$-case proof in \cite[Thm.\ 3.1]{borcea11}.
The proof of the equivalence (ii) $\Leftrightarrow$ (iii) follows from the characterization of positive semi-definiteness in \Cref{lem:MatrixPositiveIffInnerProductWithAllPositiveMatricesIsPositive}.
Lastly, the equivalence (iii) $\Leftrightarrow$ (iv) will be proven in the same manner as the $\rset$-case in \cite[Thm.\ 3.1]{borcea11}, but replacing Haviland's Theorem with its matrix version \Cref{thm:PositiveLambdaFunctionalExistsPositiveMeasure}.

\begin{proof}
(i) $\Rightarrow$ (ii):
For all $y \in \rset^n$, $T_y$ commutes with the shift operator, i.e.,
\[
	T_y \circ e^{a\cdot\nabla} = e^{a\cdot\nabla} \circ T_y
\]
for all $a \in \rset^n$, and the shift operator $e^{a\cdot\nabla}$ preserves $\rset^n$-positivity.
Consequently, (i) implies
\begin{align*}
	(T_yp)(a)
	&= (T_yp)(y+(a-y))
\\	&= \left( e^{(a-y)\nabla} T_y p \right)(y)
\\	&= \left( T_y e^{(a-y)\nabla} p \right)(y)
\\	&= \left( T e^{(a-y)\nabla} p \right)(y)\;\; \succeq 0
\end{align*}
for all $a, y \in \rset^n$ and $p\in\pos(\rset^n)$. 
Hence, (ii) is proven.

(ii) $\Rightarrow$ (i):
For all $y \in \rset^n$ and $p\in\pos(\rset^n)$, we have $(Tp)(y) = (T_yp)(y) \succeq 0$.

(ii) $\Leftrightarrow$ (iii):
This is \Cref{lem:MatrixPositiveIffInnerProductWithAllPositiveMatricesIsPositive}.

(iii) $\Rightarrow$ (iv):
Let $y \in \rset^n$ and $M \in \Herm_{m,+}$. 
Define
\[L_{y, M}: \Herm_m[x_1, \dots, x_n] \to \rset, \quad p\mapsto L_{y, M}(p) := (T_{y, M}p)(0).\]
Hence, by (iii), $L_{y, M}(p) \geq 0$ for all $p \in \pos(\rset^n)$ and therefore, by \Cref{thm:PositiveLambdaFunctionalExistsPositiveMeasure}, $L_{y,M}$ is a moment functional and there exists a positive matrix-valued measure $\mu_{y, M}$ such that 
\[
L_{y, M}(p) = \int_{\rset^n} \skl{p(t)}{\diff\mu_{y, M}(t)}
\]
for all $p \in \Herm_m[x_1, \dots, x_n]$. 
Additionally,
\[(T_{y, M}p)(x) = \left[ (e^{x\cdot\nabla}T_{y, M})p \right](0) = \left[ (T_{y, M}e^{x\cdot\nabla})p \right](0) = L_{y, M}(e^{x\cdot\nabla}p)\]
for all $x \in \rset^n$ and $p \in \Herm_m[x_1, \dots, x_n]$, i.e.,
\begin{equation}\label{eq:TyMintegral}
(T_{y, M}p)(x) = \int_{\rset^n} \skl{p(x+t)}{\diff\mu_{y, M}(t)}
\end{equation}
for all $x,y \in \rset^n$, $M\in\Herm_{m,+}$, and $p \in \Herm_m[x_1, \dots, x_n]$.
Since $\Herm_{m,+}\subseteq\Herm_m$ is full dimensional and by linearity of $T_{y,M}$ in $M$ (\Cref{dfn:TyM}), for all $y\in\rset^n$ and $M\in\Herm_m$, there exists a matrix-valued measure $\mu_{y,M}$ in equation (\ref{eq:TyMintegral}).
$\mu_{y,M}$ is positive for $M\in\Herm_{m,+}$.

Now let $\beta \in \nset_0^n$ and $x, y \in \rset^n$.
then, by \Cref{lem:IntegralInnerProductSwap},
\begin{align*}
	\skl{\int_{\rset^n} (x+t)^\beta ~\diff\mu_{y, M_1}(t)}{M_2}
	&= \int_{\rset^n} \skl{M_2 \cdot (x+t)^\beta}{~\diff\mu_{y, M_1}(t)}
\\	&= T_{y, M_1}(M_2 \cdot x^\beta)
\\	&= \skl{T_y(M_2 \cdot x^\beta)}{M_1}
\end{align*}
for all $M_1, M_2 \in\Herm_m$.
Hence,
\begin{align*}
	\sum_{i, j = 1}^m H_{i, j} \cdot \skl{\int_{\rset^n} (x+t)^\beta ~\diff\mu_{y, H_{i, j}}(t)}{M}
	&= \sum_{i, j = 1}^m H_{i, j} \cdot \skl{T_y(M \cdot x^\beta)}{H_{i, j}}
\\	&= T_y(M \cdot x^\beta)
\\	&= \sum_{\alpha \preceq \beta} \binom{\beta}{\alpha}\cdot Q_\alpha(M)(y)\cdot x^{\beta-\alpha}
\end{align*}
for all $M \in\Herm_m$. 
With $x = 0$, it follows that
\begin{align*}
	Q_\beta(M)(y)
	&= \sum_{i, j = 1}^m H_{i, j} \cdot \skl{\int_{\rset^n} t^\beta ~\diff\mu_{y, H_{i, j}}(t)}{M}
= \int_{\rset^n} t^\beta ~\diff\widetilde{\mu}_{y, M}(t)
\end{align*}
for all $\beta \in \nset_0^n$, $y \in \rset^n$, and $M \in\Herm_m$ with $\widetilde{\mu}_{y, M}$ from (\ref{eq:QBetaIntegralMuTildeMeasure}).

(iv) $\Rightarrow$ (iii): 
Let $y \in \rset^n$ and $M \in\Herm_{m,+}$.
By (iv), there exists a positive matrix-valued measure $\mu_{y, M}$ with 
\[
	(T_{y, M}p)(x) = \int_{\rset^n} \skl{p(x+t)}{\diff\mu_{y, M}(t)}
\]
for all $p \in\Herm_m[x_1, \dots, x_n]$.
Let $p \in \pos(\rset^n)$.
Then, by \Cref{rem:PositiveMatrixValuedMeasureRadonNikodymDerivative},
\[(T_{y, M}p)(x) \geq 0\]
for all $y\in\rset^n$ and $M\in\Herm_{m,+}$ which proves (iii).
\end{proof}

The condition (iv) of \Cref{thm:BorceaCharacterizationPolynomialMatrices} allows $Q_\alpha$ to be written as moments of a measure $\widetilde{\mu}_{y, M}$, i.e., 
\[
	Q_\beta(M)(y) = \int_{\rset^n} t^\beta ~\diff\widetilde{\mu}_{y, M}(t)
\]
for $\beta \in \nset_0^n$, $M \in\Herm_m$, and $y \in \rset^n$.
The measure $\widetilde{\mu}_{y, M}$ on $(\rset^n,\fB(\rset^n))$ is a finite signed matrix-valued measure but was not proven to be positive.
Problems can arise due to indeterminacy of the underlying moment problems.

\begin{exm}\label{exm:QAlphaMeasureNotPositive}
Let
\[
	f(x) := \frac{1}{\sqrt{2\pi}}\cdot \one_{(0, \infty)}(x)\cdot \frac{1}{x}\cdot e^{-\frac{1}{2}\log(x)^2}
\]
be the density of the $\log$-normal distribution.
Then
\[\int_0^\infty x^n\cdot f(x) ~\diff x = e^{\frac{1}{2}n^2}\]
and
\[\int_0^\infty x^n\cdot \sin(2\pi\log(x))\cdot f(x) ~\diff x = 0\]
for all $n \in \nset_0$.
The measures $\mu_{y, M}$ in \Cref{thm:BorceaCharacterizationPolynomialMatrices} (iv) are not required to fulfill any linearity conditions like
\[
\mu_{y, A+B} = \mu_{y, A} + \mu_{y, B} \quad\text{or}\quad \mu_{y, \lambda A} = \lambda \mu_{y, A}
\]
for $A, B \in\Herm_m$ and $\lambda\in\rset$.
The values of these measures can differ on non-compact sets $K \subseteq \rset$, where the polynomials can not approximate the indicator functions $\one_K$.
The proof (iii) $\Rightarrow$ (iv) of \Cref{thm:BorceaCharacterizationPolynomialMatrices} can not preserve such information in cases of indeterminacy.
Hence, let $m = 2$ and define
\[
	\diff\mu_{y, M}(x) := M f(x) ~\diff x
\]
for $y \in \rset$ and $M \in\Herm_2$.
The measure $\mu_{y, M}$ is a positive matrix-valued measure for all $M \in \Herm_{m,+}$.
Let  
\[
	X := H_{1, 2} = \frac{1}{\sqrt{2}}\begin{pmatrix}
		0 & 1
	\\	1 & 0
	\end{pmatrix}
\]
and define 
\[
\nu_{y, M} := \begin{cases}
		\mu_{y, M} &\text{for}\ M \neq X,
	\\	\mu_{y, X} + X \cdot 10\sin(2\pi\log x )\cdot f(x) ~\diff x &\text{for}\ M = X
	\end{cases}
\]
for all $y \in \rset$ and $M \in\Herm_2$. 

Then $\mu_{y, M}$ and $\nu_{y, M}$ are both positive matrix-valued measures for $M \in\Herm_{m,+}$ and $y \in \rset$.
Both measures give rise to the same operators $T_{y, M}$ for $y \in \rset$ and $M \in\Herm_2$, but
\[
	\widetilde{\nu}_{y, M} := \sum_{i, j = 1}^m H_{i, j} \cdot \skl{\nu_{y, H_{i, j}}}{M}
\]
is not a positive matrix-valued measure for all $M \in \Herm_{2,+}$. 
This is because, for
\[Z := \begin{pmatrix}
		1 & 1
	\\	1 & 1
	\end{pmatrix} \in \Herm_{m,+},\]
it holds that 
\begin{align*}
	\diff\widetilde{\nu}_{y, Z}(x)
	&= \begin{pmatrix}
		1 & 1 + 10\cdot\sin(2\pi\log x )
	\\	1 + 10\cdot\sin(2\pi\log x) & 1
	\end{pmatrix}\cdot f(x) ~\diff x
\end{align*}
where
\[
	\widetilde{\nu}_{y, Z}((1, \infty)) \in \Herm_2 \setminus \Herm_{2,+}
\]
is not a positive semi-definite matrix, despite $Z \in \Herm_{2,+}$. 
\exmsymbol
\end{exm}

Hence, through indeterminacy, the measures in condition (iv) of \Cref{thm:BorceaCharacterizationPolynomialMatrices} can be chosen such that $Q_\beta(M)(y)$ are the moments of a matrix-valued measure $\widetilde{\nu}_{y, M}$ which is not positive for all $M \in \Herm_{m,+}$.  
On the other hand, in \Cref{exm:QAlphaMeasureNotPositive} choosing
\[
	\widetilde{\mu}_{y, M} := \sum_{i, j = 1}^m H_{i, j} \cdot \skl{\mu_{y, H_{i, j}}}{M}
\]
is a positive matrix-valued measure for all $M \in\Herm_{m,+}$ with the same moments as $\widetilde{\nu}_{y, M}$. 
Hence, the $Q_\beta(M)(y)$ can be given as moments of the positive matrix-valued measure $\widetilde{\mu}_{y, M}$ for $y \in \rset$ and $M \in\Herm_{m,+}$.

We were not able to prove that there is always a choice, such that the $Q_\beta$ can be given by a positive matrix-valued measure.
These problems do not arise for positivity preservers on compact sets, as moment sequences on compact sets are always determinate.

The difference in \Cref{thm:BorceaCharacterizationKPositivePreserverPolynomialMatrices} with $m\geq 2$ to the case of real coefficients ($m=1$) lies in being able to choose different measures with the same moments for different basis vectors of $\Herm_m$. 
In the case $m=1$, there is only one basis vector to choose a measure for and thus this problem does not arise.

For $m=1$, \Cref{thm:BorceaCharacterizationPolynomialMatrices} reduces to \cite[Thm.\ 3.1]{borcea11}.

\begin{cor}[{\cite[Thm.\ 3.1]{borcea11}}]\label{cor:BorceaTheoremHerm1}
Let $n \in \nset$ and let 
\[
T:\rset[x_1,\dots,x_n]\to\rset[x_1,\dots,x_n],\quad T = \sum_{\alpha \in \nset_0^n} \frac{1}{\alpha!}\cdot q_\alpha\cdot \partial^\alpha
\]
be linear with polynomials $q_\alpha \in \rset[x_1, \dots, x_n]$ for all $\alpha \in \nset_0^n$. 
Let
\[
	\rset[x_1, \dots, x_n]_+ := \{p \in \rset[x_1, \dots, x_n] \mid p \geq 0\ \text{on}\ \rset^n\}
\]
Then the following are equivalent: 
\begin{enumerate}[(i)]
\item $T\rset[x_1, \dots, x_n]_+ \subseteq \rset[x_1, \dots, x_n]_+$.

\item For every $y \in \rset^n$, $T_y\rset[x_1, \dots, x_n]_+ \subseteq \rset[x_1, \dots, x_n]_+$ where 
\[
T_y = \sum_{\alpha \in \nset_0^n} \frac{1}{\alpha!}\cdot q_\alpha(y)\cdot \partial^\alpha
\]
for $y \in \rset^n$.

\item For every $y \in \rset^n$, there exists a measure $\mu_y$ such that 
\[
(T_yp)(x) = \int_{\rset^n} p(x+t) ~\diff\mu_y(t)
\]
for all $p \in \rset[x_1, \dots, x_n]$.
In this case, for $\beta \in \nset_0^n$ and $y \in \rset^n$, 
\begin{equation}\label{eq:coeffsM=1}
q_\beta(y) = \int_{\rset^n} t^\beta ~\diff\mu_y(t).
\end{equation}
\end{enumerate}
\end{cor}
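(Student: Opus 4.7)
The plan is to deduce the corollary directly from \Cref{thm:BorceaCharacterizationPolynomialMatrices} by specializing to $m=1$. First I would identify $\Herm_1\cong\rset$, $\Herm_{1,+}\cong[0,\infty)$, and therefore $\Herm_1[x_1,\dots,x_n]=\rset[x_1,\dots,x_n]$ and $\pos(\rset^n)=\rset[x_1,\dots,x_n]_+$. The inner product $\langle A,B\rangle=\tr(AB)$ collapses to ordinary multiplication, and the only basis vector of $\Herm_1$ is $H_{1,1}=1$. In particular, any $\rset$-linear $Q_\alpha:\rset\to\rset[x_1,\dots,x_n]$ is determined by the single polynomial $q_\alpha:=Q_\alpha(1)$, so the canonical representation from \Cref{lem:DiffSumRepresentationRLinearOp} becomes $T=\sum_{\alpha\in\nset_0^n}\frac{1}{\alpha!}\cdot q_\alpha\cdot\partial^\alpha$, matching the hypothesis of the corollary.

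Next I would transfer the three conditions. Condition (i) of the corollary is verbatim condition (i) of the theorem, and in \Cref{dfn:TyM} the operator $T_y$ reduces to $\sum_{\alpha\in\nset_0^n}\frac{1}{\alpha!}\cdot q_\alpha(y)\cdot\partial^\alpha$, so (ii) of the corollary is (ii) of the theorem. For condition (iii) of the theorem take $M=1\in\Herm_{1,+}$; then $T_{y,M}p=\langle T_y p,1\rangle=T_y p$, and since $\Herm_{1,+}=[0,\infty)$, positivity for all $M\in\Herm_{1,+}$ is equivalent by scaling to (ii) of the corollary. Lastly, in condition (iv) of the theorem the positive matrix-valued measures $\mu_{y,M}$ are just positive Borel measures in the scalar case; setting $\mu_y:=\mu_{y,1}$ yields $(T_y p)(x)=\int p(x+t)~\diff\mu_y(t)$, which is condition (iii) of the corollary.

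Finally I would read off the moment formula (\ref{eq:coeffsM=1}) from (\ref{eq:QBetaIntegralMuTildeMeasure}): with $m=1$ and $M=1$, the auxiliary measure is $\widetilde{\mu}_{y,1}=H_{1,1}\cdot\langle\mu_{y,H_{1,1}},1\rangle=\mu_y$, hence $q_\beta(y)=Q_\beta(1)(y)=\int_{\rset^n}t^\beta~\diff\mu_y(t)$. There is no significant obstacle here: the indeterminacy issue identified in \Cref{exm:QAlphaMeasureNotPositive}, where $\widetilde{\mu}_{y,M}$ may fail to be positive even though every $\mu_{y,M}$ with $M\in\Herm_{m,+}$ is positive, simply cannot occur when $m=1$, because there is only one basis vector of $\Herm_1$ and hence no freedom to pick inconsistent representing measures across the basis.
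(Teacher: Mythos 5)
Your proposal is correct and follows essentially the same route as the paper: both specialize \Cref{thm:BorceaCharacterizationPolynomialMatrices} to $m=1$, identify $\Herm_1\cong\rset$ with the inner product collapsing to multiplication and $Q_\alpha$ determined by $q_\alpha=Q_\alpha(1)$, and read off the three conditions and the moment formula from (\ref{eq:QBetaIntegralMuTildeMeasure}). The only step you leave implicit is the direction from (iii) of the corollary back to (iv) of the theorem, which the paper handles by explicitly setting $\mu_{y,\lambda}:=\lambda\mu_y$; this is a one-line scaling observation you already invoke elsewhere, so it is not a genuine gap.
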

\begin{proof}
For $m=1$, $\Herm_1 = \rset$, $\Herm_{1,+} = \{\lambda \in \rset \mid \lambda \geq 0\}$, and $\Pos(\rset^n) = \rset[x_1, \dots, x_n]_+$. 
Additionally, the inner product
\[\skl{\,\cdot\,}{\,\cdot\,}: \Herm_1 \times \Herm_1 \to \rset\]
reduces to
\[\skl{\lambda_1}{\lambda_2} = \lambda_1 \cdot \lambda_2\]
for all $\lambda_1, \lambda_2 \in \Herm_1 = \rset$. 
For $\alpha\in\nset_0^n$, every map
\[Q_\alpha: \Herm_1 \to \rset[x_1, \dots, x_n]\]
is uniquely determined by its value $q_\alpha := Q_\alpha(1) \in \rset[x_1, \dots, x_n]$.
Hence,
\[
	T = \sum_{\alpha\in\nset_0^n} \frac{1}{\alpha!}\cdot (Q_\alpha \times \partial^\alpha) = \sum_{\alpha \in \nset_0^n} \frac{1}{\alpha!}\cdot q_\alpha\cdot \partial^\alpha.
\] 
For $y \in \rset^n$, equivalently
\[T_y = \sum_{\alpha\in\nset_0^n} \frac{1}{\alpha!}\cdot Q_\alpha(1)(y)\cdot\partial^\alpha = \sum_{\alpha\in\nset_0^n} \frac{1}{\alpha!}\cdot q_\alpha(y)\cdot\partial^\alpha.\]
By $\Pos(\rset^n) = \rset[x_1, \dots, x_n]_+$, \Cref{thm:BorceaCharacterizationPolynomialMatrices} gives (i) $\Leftrightarrow$ (ii). 

(i) $\Rightarrow$ (iii): 
By \Cref{thm:BorceaCharacterizationPolynomialMatrices}, for $y \in \rset^n$ and $\lambda \in \rset$, there exist measures $\mu_{y, \lambda}$ such that
\begin{enumerate}[(a)]
\item for all $y \in \rset^n$ and $\lambda \geq 0$, $\mu_{y, \lambda}$ is a positive measure and,

\item for all $x, y \in \rset^n$ and $\lambda \in \rset$,
\[
T_{y, \lambda}p(x) = \int_{\rset^n} \skl{p(x+t)}{\diff\mu_{y, \lambda}(t)}
\]
for all $p \in \rset[x_1, \dots, x_n]$.
\end{enumerate}
Since the inner product reduces to just multiplication, it follows, that $T_{y, \lambda} = \lambda T_y$ for all $y \in \rset^n, \lambda \in \rset$. 
Additionally the integration reduces to
\[
	\int_{\rset^n} \skl{p(t)}{\diff\mu_{y, \lambda}(t)} = \int_{\rset^n} p(t) ~\diff\mu_{y, \lambda}(t)
\]
for all $y \in \rset^n, \lambda \in \rset$, and $p \in \rset[x_1, \dots, x_n]$.
Hence,
\[
	T_yp(x) = T_{y, 1}p(x) = \int_{\rset^n} \skl{p(x+t)}{\diff\mu_{y, 1}(t)} = \int_{\rset^n} p(x+t) ~\diff\mu_{y, 1}(t)
\]
for all $x, y \in \rset^n$ and $p \in \rset[x_1, \dots, x_n]$ which proves (iii). 

The statement (\ref{eq:coeffsM=1}) for the polynomial coefficients $q_\alpha$, $\alpha \in \nset_0^n$, follows from the fact, that the sum in (\ref{eq:QBetaIntegralMuTildeMeasure}) from \Cref{thm:BorceaCharacterizationPolynomialMatrices} reduces to a single element due to having a $1$-dimensional real vector space $\Herm_1 = \rset$.

(iii) $\Rightarrow$ (i):
Define $\mu_{y, \lambda} := \lambda \mu_y$.
Then (i) follows from \Cref{thm:BorceaCharacterizationPolynomialMatrices}.
\end{proof}

\section{Matrix $K$-Positivity Preserver for Compact $K$}
\label{sec:BorceaCharacterizationOnCompactSets}

Contrary to the previous section were we worked with collections of matrix-valued measures
\[(\mu_M)_{M \in \Herm_m}\]
in \Cref{thm:BorceaCharacterizationPolynomialMatrices} (iv), we work now with operator-valued measures
\[\mu: \cA \to \cL(\Herm_m, \Herm_m),\]
where $\cL(\Herm_m, \Herm_m)$ denotes the space of linear operators $\Herm_m \to \Herm_m$. 
For a given matrix $M \in \Herm_m$, we can write $\mu_M$ as the measure
\[\mu(\,\cdot\,)(M): \cA \to \Herm_m.\]
The difference is that collections of matrix-valued measures do not need to fulfill any linearity conditions, while operator-valued measures do.
This solves the problem outlined in \Cref{exm:QAlphaMeasureNotPositive}, but the existence of such operator-valued measures can only be proven so far only on compact sets $K \subseteq \rset^n$.

\begin{dfn}[see \protect{\cite[Sect.\ 2]{cimpzalar13}}]
Let $m \in \nset$ and let $\cA$ be a $\sigma$-algebra.
A set function
\[\mu: \cA \to \cL(\Herm_m, \Herm_m)\]
is called a \emph{positive operator-valued measure} if
\[\mu[M] := \mu(\,\cdot\,)(M): \cA \to \Herm_m\]
is a positive matrix-valued measure for all $M\in\Herm_{m,+}$.
\end{dfn}

Integrals with respect to such measures are defined in the following way.

\begin{dfn}
Let $m, n \in \nset$, $K \subseteq \rset^n$ be closed, $p \in \Herm_m[x_1, \dots, x_n]$, and $\mu: \fB(K) \to \cL(\Herm_m, \Herm_m)$ be a positive operator-valued measure.
We define
\[\int_K p(x) ~\diff\mu(x) := \sum_{i, j = 1}^m \int_K \skl{p(x)}{H_{i, j}} ~\diff\mu[H_{i, j}](x)\quad \in \Herm_m.\]
\end{dfn}

Integrals of positive matrix polynomials with respect to positive operator-valued measures are again positive due to the linearity conditions on operator-valued measures. 
Therefore, the following holds.

\begin{lem}\label{lem:IntegralPositiveOperatorValuedMeasurePositivePolynomialPositive}
Let $m, n \in \nset$, let $K \subseteq \rset^n$ be closed,
\[\mu: \fB(K) \to \cL(\Herm_m, \Herm_m)\]
be a positive operator-valued measure, and let $p \in \pos(K)$.
Then
\[\int_K p(x) ~\diff\mu(x) \succeq 0.\]
\end{lem}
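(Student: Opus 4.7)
The plan is to test positive semi-definiteness via \Cref{lem:MatrixPositiveIffInnerProductWithAllPositiveMatricesIsPositive}, which reduces the claim $\int_K p(x)~\diff\mu(x) \succeq 0$ to showing
\[
\skl{\int_K p(x)~\diff\mu(x)}{X} \geq 0 \quad\text{for every } X \in \Herm_{m,+}.
\]
Unwinding the definition of the integral and using the bilinearity of $\skl{\,\cdot\,}{\,\cdot\,}$, this pairing becomes
\[
\skl{\int_K p(x)~\diff\mu(x)}{X} = \sum_{i,j=1}^m \int_K \skl{p(x)}{H_{i,j}}~\diff\skl{\mu[H_{i,j}]}{X}(x),
\]
a sum of integrals of scalar polynomials against the signed scalar measures $S \mapsto \skl{\mu(S)(H_{i,j})}{X}$.

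The heart of the proof is to recognise these signed scalar measures as the components of a single positive matrix-valued measure. Since $(\Herm_m, \skl{\,\cdot\,}{\,\cdot\,})$ is a finite-dimensional real Hilbert space, each operator $\mu(S) \in \cL(\Herm_m, \Herm_m)$ admits a unique adjoint $\mu(S)^*$ characterised by $\skl{\mu(S)(A)}{B} = \skl{A}{\mu(S)^*(B)}$. I would set
\[
\nu: \fB(K) \to \Herm_m,\qquad \nu(S) := \mu(S)^*(X).
\]
For every $Y \in \Herm_{m,+}$,
\[
\skl{\nu(S)}{Y} = \skl{X}{\mu(S)(Y)} \geq 0,
\]
because $X \succeq 0$ and $\mu(S)(Y) \succeq 0$ by the positivity of $\mu$; \Cref{lem:MatrixPositiveIffInnerProductWithAllPositiveMatricesIsPositive} then gives $\nu(S) \in \Herm_{m,+}$. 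Countable additivity of $\nu$ transfers, pairing by pairing, from that of the scalar measures $S \mapsto \skl{X}{\mu(S)(Y)}$, so $\nu$ is a positive matrix-valued measure on $(K, \fB(K))$.

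With the identity $\skl{\mu[H_{i,j}](S)}{X} = \skl{H_{i,j}}{\nu(S)}$ in hand, the signed scalar measure $\skl{\mu[H_{i,j}]}{X}$ is exactly the component measure $\nu_{i,j}'$ from \Cref{def:integral_polymatrix}, and the sum above collapses to
\[
\skl{\int_K p(x)~\diff\mu(x)}{X} = \int_K \skl{p(x)}{\diff\nu(x)}.
\]
The Radon--Nikodym argument of \Cref{rem:PositiveMatrixValuedMeasureRadonNikodymDerivative} applies verbatim with $K$ in place of $\rset^n$, since $p(x) \succeq 0$ and $\rho(x) \succeq 0$ at $\tr(\nu)$-almost every point of $K$, yielding that the right-hand side is non-negative. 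As $X \in \Herm_{m,+}$ was arbitrary, \Cref{lem:MatrixPositiveIffInnerProductWithAllPositiveMatricesIsPositive} completes the proof. The only non-routine step is the construction of $\nu$ from the adjoint together with the verification that it is positive matrix-valued; everything else is a direct match between the two matrix-valued integration conventions.
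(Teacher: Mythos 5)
Your proof is correct and follows essentially the same route as the paper: reduce to pairing with an arbitrary $X\in\Herm_{m,+}$, package the signed scalar measures $S\mapsto\skl{\mu(S)(H_{i,j})}{X}$ into a single positive matrix-valued measure, and conclude via \Cref{rem:PositiveMatrixValuedMeasureRadonNikodymDerivative} and \Cref{lem:MatrixPositiveIffInnerProductWithAllPositiveMatricesIsPositive}. Your adjoint construction $\nu(S):=\mu(S)^*(X)$ is exactly the paper's $\widetilde{\mu}_X(S)=\sum_{i,j}\skl{\mu[H_{i,j}](S)}{X}H_{i,j}$ written basis-free, so the two arguments coincide.
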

\begin{proof}
Let $X \in \Herm_{m,+}$ and define the matrix-valued measure
\[\widetilde{\mu}_X: \fB(K) \to \Herm_m \quad\text{by}\quad \widetilde{\mu}_X := \sum_{i, j = 1}^m \skl{\mu[H_{i, j}]}{X} H_{i, j}.\]
Let $M \in \Herm_{m,+}$ and $S \in \fB(K)$.
By linearity of operator-valued measures,
\begin{align*}
\skl{\widetilde{\mu}_X(S)}{M}
	&= \sum_{i, j = 1}^m \skl{\mu(S)(H_{i, j})}{X} \skl{M}{H_{i, j}}
\\	&= \skl{\mu(S)\left(\sum_{i, j = 1}^m  \skl{M}{H_{i, j}}H_{i, j}\right)}{X} 
= \skl{\mu(S)(M)}{X}
\geq 0.
\end{align*}
Hence, $\widetilde{\mu}_X$ is a positive matrix-valued measure by \Cref{lem:MatrixPositiveIffInnerProductWithAllPositiveMatricesIsPositive} and
\begin{align*}
	\skl{\int_K p(x) ~\diff\mu(x)}{X}
	&= \sum_{i, j = 1}^m \int_K \skl{p(x)}{H_{i, j}} ~\diff\skl{\mu[H_{i, j}]}{X}(x)
\\	&= \sum_{i, j = 1}^m \int_K \skl{p(x)}{H_{i, j}} ~\diff\skl{\widetilde{\mu}_X}{H_{i, j}}(x)
\\ &= \int_K \skl{p(x)}{\diff\widetilde{\mu}_X(x)} \quad\geq 0
\end{align*}
for all $p \in \Pos(K)$, where the positive semi-definiteness follows from \Cref{rem:PositiveMatrixValuedMeasureRadonNikodymDerivative}.
Since $X\in\Herm_{m,+}$ is arbitrary and by \Cref{lem:MatrixPositiveIffInnerProductWithAllPositiveMatricesIsPositive}, the assertion is proven.
\end{proof}

This positivity from \Cref{lem:IntegralPositiveOperatorValuedMeasurePositivePolynomialPositive} can fail without the linear conditions of operator-valued measures.
That is, if we would only look at collections of measures as in \Cref{thm:BorceaCharacterizationPolynomialMatrices} (iv), then this positivity will not hold anymore, as shown in \Cref{exm:QAlphaMeasureNotPositive}.
For the characterization of operator-valued measures according to \Cref{thm:BorceaCharacterizationPolynomialMatrices}, we use \Cref{thm:PositiveMapCompactSetExistsPositiveMeasure}.

Note, $K \subseteq \rset^n$ does not need to be a compact set for \Cref{thm:PositiveMapCompactSetExistsPositiveMeasure} to hold. 
J.\ Cimprič and A.\ Zalar use a more general setting on arbitrary closed $K \subseteq \rset^n$, which requires additional conditions on the functional $L$.
This is because the proof of \cite[Thm.\ 4]{cimpzalar13} uses Arveson's Extension Lemma, which replaces simple positivity with complete positivity.
This is necessary, since there are positive but not completely positive functionals which can fail to have a linear extension for some finite-dimensional Hilbert spaces, see e.g.\ \cite{chiribella23}.

However, on compact sets $K$ this additional condition is not required anymore and we can formulate the following theorem.

\begin{thm}\label{thm:BorceaCharacterizationKPositivePreserverPolynomialMatrices}
Let $m, n \in \nset$, let $K \subseteq \rset^n$ be compact, and let
\[T: \Herm_m[x_1, \dots, x_n] \to \Herm_m[x_1, \dots , x_n]\]
be a linear with canonical representation
\[
	T = \sum_{\alpha \in \nset_0^n} \frac{1}{\alpha!}\cdot (Q_\alpha \times \partial^\alpha).
\]
Then the following are equivalent:
\begin{enumerate}[(i)]
\item $T$ is a $K$-positivity preserver, i.e., $T\pos(K) \succeq \pos(K)$.

\item For each $y \in K$, there exists a positive operator-valued measure $\nu_y$ on ${K-y} \subseteq \rset^n$ such that 
\[
	Q_\beta(M)(y) = \int_{K-y} t^\beta ~\diff\nu_y[M](t)
\]
for all $\beta \in \nset_0^n$.
In this case,
\[
	(T_y p)(x) = \int_{K-y} p(x+t) ~\diff\nu_y(t)
\]
for all $x \in K$ and $p \in \Herm_m[x_1, \dots, x_n]$.
\end{enumerate}
\end{thm}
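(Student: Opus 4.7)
The plan is to follow the two-step template of \Cref{thm:BorceaCharacterizationPolynomialMatrices}, but replace the matrix-valued Haviland theorem \Cref{thm:PositiveLambdaFunctionalExistsPositiveMeasure} by its operator-valued refinement \Cref{thm:PositiveMapCompactSetExistsPositiveMeasure}, which is available precisely because $K$ is compact. The built-in $\Herm_m$-linearity of operator-valued measures is what repairs the obstruction illustrated in \Cref{exm:QAlphaMeasureNotPositive}: the measures obtained for different matrix arguments $M$ are now automatically compatible.

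For (i) $\Rightarrow$ (ii), I would fix $y \in K$ and form the $\rset$-linear map
\[
L_y: \Herm_m[x_1, \dots, x_n] \to \Herm_m, \qquad L_y(p) := (T_y p)(0).
\]
Given $p \in \pos(K-y)$, set $q(x) := p(x-y) = (e^{-y\cdot\nabla}p)(x)$, so that $q \in \pos(K)$. Since the coefficients $Q_\alpha(\,\cdot\,)(y)$ of $T_y$ do not depend on $x$, $T_y$ commutes with every shift operator, and therefore
\[
L_y(p) = (T_y e^{y\cdot\nabla} q)(0) = (e^{y\cdot\nabla} T_y q)(0) = (T_y q)(y) = (Tq)(y) \succeq 0.
\]
Applied to the compact set $K-y$, \Cref{thm:PositiveMapCompactSetExistsPositiveMeasure} then yields a unique positive operator-valued measure $\nu_y: \fB(K-y) \to \cL(\Herm_m, \Herm_m)$ with $L_y(p) = \int_{K-y} p(t)\,\diff\nu_y(t)$. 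Plugging in monomials $Mx^\beta$, the left-hand side collapses to $Q_\beta(M)(y)$ because only the $\alpha = \beta$ term in the canonical expansion of $T_y(Mx^\beta)$ survives at $x=0$, while the right-hand side equals $\int_{K-y} t^\beta \,\diff\nu_y[M](t)$ by the linearity identity $\nu_y[M] = \sum_{i, j = 1}^m \skl{M}{H_{i, j}} \nu_y[H_{i, j}]$. The same shift-commutation argument then delivers the full representation $(T_y p)(x) = L_y(e^{x\cdot\nabla}p) = \int_{K-y} p(x+t)\,\diff\nu_y(t)$.

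For (ii) $\Rightarrow$ (i), I would start from the moment formula, apply the binomial expansion
\[
\int_{K-y} M (x+t)^\beta \,\diff\nu_y(t) = \sum_{\alpha \preceq \beta} \binom{\beta}{\alpha} x^{\beta - \alpha} \int_{K-y} t^\alpha \,\diff\nu_y[M](t) = T_y(M x^\beta),
\]
and extend by $\rset$-linearity to recover $(T_y p)(x) = \int_{K-y} p(x+t)\,\diff\nu_y(t)$ for every $p \in \Herm_m[x_1, \dots, x_n]$. Then for $p \in \pos(K)$ and $y \in K$ the polynomial $t \mapsto p(y+t)$ lies in $\pos(K-y)$, so $(Tp)(y) = (T_y p)(y) = \int_{K-y} p(y+t)\,\diff\nu_y(t) \succeq 0$ by \Cref{lem:IntegralPositiveOperatorValuedMeasurePositivePolynomialPositive}. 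The only genuinely delicate point is the appeal to \Cref{thm:PositiveMapCompactSetExistsPositiveMeasure}: its proof in \cite{cimpzalar13} uses Arveson-style completely positive extension, and this is exactly why the argument cannot be pushed beyond compact $K$ without additional complete-positivity hypotheses on $T$. Everything else reduces to bookkeeping with the shift operators and the identity $\int M t^\beta \,\diff\nu_y = \int t^\beta \,\diff\nu_y[M]$.
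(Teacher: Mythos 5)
Your proposal is correct and follows essentially the same route as the paper: evaluate $T$ (resp.\ $T_y$) at a point to get a positive $\Herm_m$-valued functional, invoke \Cref{thm:PositiveMapCompactSetExistsPositiveMeasure} on a compact set to obtain the operator-valued measure, extract the $Q_\beta$ via monomials, and use \Cref{lem:IntegralPositiveOperatorValuedMeasurePositivePolynomialPositive} for the converse. The only difference is cosmetic: you define $L_y(p)=(T_yp)(0)$ and work on $K-y$ directly via shift-commutation, whereas the paper defines $L_y(p)=(Tp)(y)$ on $K$ and then passes to $K-y$ by a pushforward; the two are equivalent reparametrizations.
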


The proof is an adapted version of the proof of \cite[Thm.\ 3.5]{didio25KPosPresGen} using \cite[Thm.\ 4]{cimpzalar13} on compact sets.

\begin{proof}
(i) $\Rightarrow$ (ii): 
Let $y \in K$ and define
\[L_y: \Herm_m[x_1, \dots, x_n] \to \Herm_m,\quad p\mapsto L_y(p) := (Tp)(y).\]
Then $L_y(p) \succeq 0$ for all $p \in \Pos(K)$.
\Cref{thm:PositiveMapCompactSetExistsPositiveMeasure} asserts the existence of a positive Borel operator-valued measure $\mu_y: \fB(K) \to \cL(\Herm_m, \Herm_m)$ such that
\[L_y(p) = \int_K p(x) ~\diff \mu_y(x)\]
for all $p \in \Herm_m[x_1, \dots, x_n]$. 
Additionally,
\[L_y(M \cdot p) = (T(M \cdot p))(y) = (T_y(M \cdot p))(y) = \sum_{\alpha \in \nset_0^n} \frac{1}{\alpha!}\cdot Q_\alpha(M)(y)\cdot (\partial^\alpha p)(y)\]
for all $M \in \Herm_m$ and $p \in \rset[x_1, \dots, x_n]$.
Let $M \in \Herm_m$ and $\beta \in \nset_0^n$.
Set $\widetilde{p}(x) := (x-y)^\beta$.
Then
\[
\partial^\alpha \widetilde{p}(y) = \begin{cases}
0 & \alpha \neq \beta
\\	\beta! & \alpha = \beta
\end{cases}\]
for all $\alpha\in\nset_0^n$ and, hence,
\[
L_y(M \cdot \widetilde{p}) 
= Q_\beta(M)(y) 
= \int_K M\cdot (x-y)^\beta ~\diff\mu_y(x) 
= \int_K (x-y)^\beta ~\diff\mu_y[M](x).
\]
for all $M \in \Herm_m$. 
Define $\nu_y(\,\cdot\,) := \mu_y(\, \cdot\, + y)$ as the pushforward measure regarding to the translation in $\rset^n$.
Then
\[\nu_y: \fB(K) \to \cL(\Herm_m, \Herm_m)\]
is a positive Borel operator-valued measure and 
\[
	Q_\beta(M)(y) = \int_{K-y} t^\beta ~\diff\nu_y[M](t)
\]
for all $\beta \in \nset_0^n$, $y \in K$, and $M \in \Herm_m$. 
Furthermore, for $y \in \rset^n$, $M \in \Herm_m$, and $\beta \in \nset_0^n$, it follows from the multi-binomial theorem that
\begin{align*}
	T_y(Mx^\beta)
	&= \sum_{\alpha \in \nset_0^n} \binom{\beta}{\alpha}\cdot Q_\alpha(M)(y)\cdot x^{\beta-\alpha}
\\	&= \sum_{\alpha \in \nset_0^n} \binom{\beta}{\alpha}\cdot x^{\beta-\alpha}\cdot \int_{K-y} t^\alpha ~\diff\nu_y[M](t)
\\	&= \int_{K-y} (x+t)^\beta ~\diff\nu_y[M](t)
\\	&= \int_{K-y} M\cdot (x+t)^\beta ~\diff\nu_y(t)
\end{align*}
and, hence, by linearity,
\[
(T_yp)(x) = \int_{K-y} p(x+t) ~\diff\nu_y(t)
\]
for all $x, y \in \rset^n$ and all $p \in \Herm_m[x_1, \dots, x_n]$.

(ii) $\Rightarrow$ (i):
Let $y \in K$ and let $\nu_y$ be a positive matrix-valued measure on $K-y$ such that
\[
	\int_{K-y} t^\beta ~\diff\nu_y[M](t) = Q_\beta(M)(y)
\]
for all $M \in \Herm_m$ and all $\beta\in\nset_0^n$.
Then
\begin{align*}
\int_{K-y} M\cdot (t+y)^\beta ~\diff\nu_y(t)
	&= \sum_{\alpha \preceq \beta} \binom{\beta}{\alpha}\cdot y^{\beta-\alpha}\cdot \int_{K-y} t^\alpha ~\diff\nu_y[M](t)
\\	&= \sum_{\alpha \in \nset_0^n} \frac{1}{\alpha!}\cdot Q_\alpha(M)(y)\cdot \partial^\alpha y^\beta
\\	&= (T_y(M \cdot x^\beta))(y)
\\	&= (T(M \cdot x^\beta))(y)
\end{align*}
for all $\beta \in \nset_0^n$, $M \in \Herm_m$, and $y \in K$.
By linearity,
\[
	\int_{K-y} p(t+y) ~\diff\nu_y(t) = (Tp)(y)
\]
for all $p \in \Herm_m[x_1, \dots, x_n]$ and $y \in K$.
Define $\mu_y(\,\cdot\,) := \nu_y(\,\cdot\, - y)$ as the pushforward measure.
Then, by \Cref{lem:IntegralPositiveOperatorValuedMeasurePositivePolynomialPositive},
\[
	(Tp)(y) = \int_K p(t) ~\diff\mu_y(t) \succeq 0
\]
for all $p \in \Pos(K)$ and $y \in K$.
Hence, $T$ is a $K$-positivity preserver.
\end{proof}


\section*{Funding}

The authors are supported by the Deutsche Forschungs\-gemein\-schaft DFG with the grant DI-2780/2-1 and the research fellowship of the first author at the Zukunfts\-kolleg of the University of Konstanz, funded as part of the Excellence Strategy of the German Federal and State Government.

\section*{Conflict of Interest Statement}

The author declares no conflict of interest.

\section*{Data Availability Statement}

There is no associated data.


\providecommand{\bysame}{\leavevmode\hbox to3em{\hrulefill}\thinspace}
\providecommand{\MR}{\relax\ifhmode\unskip\space\fi MR }
\providecommand{\MRhref}[2]{%
  \href{http://www.ams.org/mathscinet-getitem?mr=#1}{#2}
}
\providecommand{\href}[2]{#2}

\end{document}